\newtheorem{theorem}{Theorem}[section]
\newtheorem{corollary}{Corollary}[theorem]
 \newtheorem{proposition}{Proposition}[section]
\newtheoremstyle{named}{}{}{\itshape}{}{\bfseries}{.}{.5em}{\thmnote{#3's }#1}
\theoremstyle{named}
\newtheorem{remark}{Remark}
\theoremstyle{definition}
\newtheorem{example}{Example}
\theoremstyle{remark}
\theoremstyle{conclusion}
\theoremstyle{observation}
\DeclarePairedDelimiterX{\inp}[2]{\langle}{\rangle}{#1, #2}
\begin{document}
\begin{center}
\fontsize{13pt}{10pt}\selectfont
    \textsc{\textbf{ A Wide Class of Contact-Complex Riemannian Submersions}}
    \end{center}
\vspace{0.1cm}
\begin{center}
   \fontsize{12pt}{10pt}\selectfont
    \textsc{Maria Falcitelli and Cem Sayar}
\end{center}
\vspace{0.2cm}
\begin{center}
   \fontsize{12pt}{10pt}\selectfont
    \textsc{Abstract}
\end{center}
Locally conformal almost quasi-Sasakian manifolds set up a wide class of almost contact metric manifolds containing several interesting subclasses. Contact-complex Riemannian submersions whose total space belongs to each of the considered classes are studied. In particular, the Chinea-Gonzalez class of the fibres and the Gray-Hervella class of the base space are determined. The main properties of the O'Neill invariants are stated. This allows discussing the integrability of the horizontal distribution and the minimality of the fibres.
\section{Introduction}

The theory of Riemannian submersions whose total space admits an almost contact metric structure has been developed in the last three decades \cite{Chi}, \cite{Wat}, \cite{Fal}, \cite{Fa2}, \cite{Tshi}, \cite{BT}. It is known that one can consider two types of such submersions, namely contact metric submersions (almost contact metric submersions of type I) and contact-complex Riemannian submersions (almost contact metric submersions of type II). In particular, we recall that a contact-complex Riemannian submersion is a Riemannian submersion $\pi : M \rightarrow B$ from an almost contact metric manifold $(M,\varphi , \xi , \eta , g)$ onto an almost Hermitian manifold $(B,J,g')$ such that $J \circ \pi_* = \pi_* \circ \varphi.$\\

Contact-complex submersions are relevant in certain physical theories, mainly if Hamiltonian dynamics and symplectic geometry are involved, \cite{Hamilton}.

\begin{itemize}
\item In Hamiltonian mechanics, contact structures are pivotal in studying symplectic geometry. It is known that Hamiltonian dynamics describes the evolution of physical systems and can be formulated by using symplectic geometry. Contact-complex submersions have the potential to understand the dynamics of specific systems.
    \item In the study of certain field theories and gauge theories, differential geometry concepts, including almost contact structures, can be employed. The geometric structures provided by contact complex submersions may be useful in describing the geometry of the configuration space of fields.

\item Geometric quantization is an approach to quantizing classical mechanics using geometric structures on phase spaces. Symplectic and contact structures are central to this theory. The choice of contact complex submersions have a potential to play a role in defining a suitable quantization scheme for specific systems.
\end{itemize}

In this paper we develop a detailed study of such submersions, assuming that the total space $M^{2m+1}, (m\geq 2)$ is a locally conformal (l.c.) almost quasi-Sasakian manifold. The class of these manifolds, which is closely related to the Chinea and Gonzalez-D\'{a}vila classification of almost contact metric manifolds, contains several interesting subclasses, such as locally conformal cosymplectic, almost cosymplectic, $\alpha-$Kenmotsu, $\alpha-$Sasakian, and quasi-Sasakian manifolds. Contact metric submersions whose total space is an l.c. almost quasi-Sasakian manifolds are studied in \cite{Fa2}.

Firstly we establish the main properties of the O'Neill invariants. This helps in determining the Chinea and Gonzalez-D\'{a}vila class of the fibres, which inherit from the total space an almost contact metric structure, and the Gray-Hervella class of the base space. One also proves that the 1-form $\eta$ is co-closed. It follows that contact-complex Riemannian submersions from an $\alpha$-Kenmotsu manifold, $\alpha \neq 0$, cannot exist. \\
Then, we consider submersions such that the Lee form of the total space vanishes, namely the fundamental form is closed. New results are stated. In particular, the horizontal distribution is totally geodesic, if the total space is almost cosymplectic or it is a $C_{12}$-manifold. The submersion has minimal fibres, if the total space is almost cosymplectic or quasi-Sasakian. We give a method useful to obtain explicit examples. In particular, if the structure of the total space is $\alpha$-Sasakian, $\alpha \neq 0$, the horizontal distribution cannot be integrable, the fibres are minimal and the base space is a K\"{a}hler manifold.\\
The last section deals with submersions whose total space is an l.c. (almost) cosymplectic manifold. In this case, the O'Neill integrability tensor depends on the vertical component of the Lee vector field and the mean curvature of the fibres is represented by the horizontal component.\\
Explicit examples are the canonical projections of warped product manifolds $M' \times_pF$ onto $M'$, $M'$ being a K\"{a}hler manifold of (real) dimension two, and $F$ an almost cosymplectic (cosymplectic) manifold. In fact, for any non-constant smooth function $f: M' \rightarrow \mathbb{R}, \, f>0$, we define a conformal almost cosymplectic (conformal cosymplectic) structure on the manifold $M' \times_pF$ which makes the projection $\pi : M' \times_pF \rightarrow M'$ a contact-complex Riemannian submersion.

\section{Fundamental properties of the O'Neill invariants and transference of the structure}\label{section1}
We recall some definitions and results on almost contact metric manifolds (ACM-manifolds) and almost Hermitian manifolds (AH-manifolds). \\

Let $(M,\varphi, \xi, \eta, g) $ be an ACM-manifold with $\dim M =2m+1$ and denote by $\nabla$ the Levi-Civita connection of $(M,g)$ and by $\phi$ the fundamental form,
\begin{equation}\label{eq1}
\phi(X,Y)=g(X,\varphi Y).
\end{equation}
The Lee form of $M$ is the 1-form $\omega$ defined by
\begin{equation}\label{eq2}
  \omega = -\frac{1}{2(m-1)}(\delta \phi \circ \varphi + \nabla_{\xi} \eta)+\frac{\delta \eta}{2m} \eta, \quad if\quad m\geq 2,
  \end{equation}
\begin{equation}\label{eq3}
\omega = \nabla_{\xi}\eta +\frac{\delta \eta}{2}\eta, \quad if \quad m=1,
\end{equation}
where $\delta$ denotes the co-differential operator.

According to the theory developed in \cite{Chi-Gon}, we denote $\tau_i$, $i \in \{1,...,12\}$, the $C_i-component$ of $\nabla \phi$. In particular, one has
\begin{eqnarray*}
  \delta \eta &=& \bar{c}(\tau_5)(\xi),\\
  \delta \phi (\varphi X)&=&-c(\tau_4)(\varphi X)+\tau_{12}(\xi, \xi ,\varphi X),\\
  (\nabla_\xi \eta)X &=&\tau_{12}(\xi,\xi,\varphi X),
\end{eqnarray*}
where, for any section $\alpha$ of the vector bundle $C(M)=\bigoplus_{1\leq i \leq 12} C_i(M)$, $c(\alpha)$ and $\bar{c}(\alpha)$ are the 1-forms acting as
\begin{eqnarray*}
c(\alpha)(X)\mid_{U}&=&\sum_{1 \leq i \leq 2m+1}\alpha(e_i,e_i,X),\\
\bar{c}(\alpha)(X)\mid_{U} &=&\sum_{1 \leq i \leq 2m+1} \alpha(e_i, \varphi e_i ,X),
\end{eqnarray*}
$\{e_1,...,e_{2m+1}\}$ being a local orthonormal frame on an open set $U$.
Therefore, the Lee form satisfies
\begin{equation}\label{eq4}
  \omega(X)=\frac{1}{2(m-1)}c(\tau_4)(\varphi X)+\frac{\bar{c}(\tau_5)(\xi)}{2m}\eta(X), \quad if \quad m \geq 2,
\end{equation}
\begin{equation}\label{eq5}
  \omega(X)=\tau_{12}(\xi,\xi,\varphi X)+\frac{\bar{c}(\tau_5)(\xi)}{2}\eta(X), \quad if \quad m=1.
\end{equation}
Through the paper we denote by $B=\omega^{\sharp}$ the Lee vector field.
Locally conformal almost quasi-Sasakian manifolds, introduced by I. Vaisman in \cite{Va}, are characterized as the ACM-manifolds $(M,\varphi, \xi, \eta, g) $ admitting a closed 1-form $\omega$ such that
\begin{equation}\label{eq7}
  d\phi =-2\omega \wedge \phi.
\end{equation}
Moreover, if $\dim M \geq 5$ and there exists a 1-form $\omega$ satisfying \eqref{eq7}, then $\omega$ is closed and it is the Lee form of $M$. We recall another characterization of these manifolds.
\begin{theorem}\cite{Fa2}\label{thm1}
Let $(M,\varphi, \xi, \eta, g)$ be an ACM-manifold, $\dim M \geq 5$. The following conditions are equivalent:
\begin{itemize}
  \item[\textbf{i) }] $d\phi=-2 \omega \wedge \phi,$
  \item[\textbf{ii) }] $M$ is a $C_2 \oplus \bigoplus_{4\leq h \leq 7} C_h \oplus \bigoplus_{9\leq h \leq 12}C_h$-manifold and the projections $\tau_{10}$ and $\tau_{11}$ are related by $2 \tau_{10}(X,Y,\xi)+\tau_{11}(\xi, X, Y)=0.$
\end{itemize}
\end{theorem}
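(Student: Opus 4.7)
The plan is to exploit the Chinea–Gonzalez orthogonal decomposition $\nabla\phi=\sum_{i=1}^{12}\tau_i$ and compare the two sides of \textbf{i)} classwise. Since $d\phi$ is obtained from $\nabla\phi$ by the cyclic alternation
\[
d\phi(X,Y,Z)=(\nabla_X\phi)(Y,Z)+(\nabla_Y\phi)(Z,X)+(\nabla_Z\phi)(X,Y),
\]
the exterior derivative splits as $d\phi=\sum_i \mathcal{A}(\tau_i)$, where $\mathcal{A}$ is cyclic antisymmetrization. From the defining $U(m)\times 1$-equivariant relations of the Chinea–Gonzalez classes one reads off which components survive the alternation and which are killed by it.

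First I would dispose of the forward direction $\mathbf{i)\Rightarrow ii)}$. On the purely horizontal part (all three entries orthogonal to $\xi$), the cyclic sums of $\tau_1$ and $\tau_3$ are, by construction, the irreducible pieces of $d\phi$ that are orthogonal to every tensor of the form $\omega\wedge\phi$; hence the identity $d\phi=-2\omega\wedge\phi$ forces $\tau_1=\tau_3=0$. This is precisely the translation of the corresponding Gray–Hervella statement (a l.c.K.-type identity kills the nearly-Kähler and $W_3$ components), adapted to the almost contact setting. Next, on the mixed slots involving $\xi$, one expands $-2\omega\wedge\phi$ using \eqref{eq4}, separating the horizontal contribution $\tfrac{1}{2(m-1)}c(\tau_4)\circ\varphi$ from the $\eta$-contribution $\tfrac{\bar c(\tau_5)(\xi)}{2m}\eta$. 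Matching with the mixed part of $d\phi$ singles out $\tau_8$ as the component that has no analogue on the right-hand side, forcing $\tau_8=0$; and, on the slot where exactly one argument equals $\xi$, the only surviving contributions come from $\tau_{10}$ and $\tau_{11}$, producing precisely the linear relation $2\tau_{10}(X,Y,\xi)+\tau_{11}(\xi,X,Y)=0$.

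For the converse $\mathbf{ii)\Rightarrow i)}$, I would plug the assumed class restrictions back into $\mathcal{A}(\nabla\phi)$. The components $\tau_4,\tau_5$ contribute $\mathcal{A}(\tau_4)+\mathcal{A}(\tau_5)=-2\omega\wedge\phi$ up to the purely horizontal and $\eta$-part respectively, by formula \eqref{eq4} and the defining relations of $C_4$ and $C_5$; the components $\tau_2,\tau_6,\tau_7,\tau_9,\tau_{12}$ have vanishing cyclic sum by their Chinea–Gonzalez characterizations; and finally the combination $\mathcal{A}(\tau_{10})+\mathcal{A}(\tau_{11})$ vanishes precisely because of the relation $2\tau_{10}(X,Y,\xi)+\tau_{11}(\xi,X,Y)=0$. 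Adding everything yields $d\phi=-2\omega\wedge\phi$, as required.

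The main obstacle is not conceptual but bookkeeping: each $\tau_i$ satisfies a distinct pair of $\varphi$-equivariance and $\eta$-trace identities from \cite{Chi-Gon}, and one has to verify carefully, for every $i$, whether $\mathcal{A}(\tau_i)$ is zero, contributes a multiple of $\omega\wedge\phi$, or produces a genuinely new irreducible piece. The subtlest point is the mixed slot where exactly one entry is $\xi$, because $\tau_{10}$ and $\tau_{11}$ both feed into it and neither of them alone is determined by the equation; only their prescribed linear combination is, which is why the extra relation in \textbf{ii)} is unavoidable. The closedness of $\omega$ — automatic in dimension at least five by Vaisman's argument — is what makes both implications compatible and is used implicitly when commuting $d$ with the splitting of $\omega$ into its $c(\tau_4)\circ\varphi$- and $\bar c(\tau_5)(\xi)\eta$-parts.
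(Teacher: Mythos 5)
The paper does not prove this theorem: it is quoted from \cite{Fa2}, so there is no in-paper argument to measure yours against. On its own terms, your strategy is the natural (and, as far as one can tell, the intended) one --- decompose $\nabla\phi$ into the twelve Chinea--Gonzalez components and push each through the alternation $\mathcal{A}$ that produces $d\phi$ --- and the shape of your conclusion is right: the components whose alternation is nonzero and not a multiple of $\omega\wedge\phi$ must vanish, which is exactly what excludes $C_1$, $C_3$, $C_8$, while the $\xi$-slot forces the stated relation between $\tau_{10}$ and $\tau_{11}$.

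As written, however, this is a plan rather than a proof: every step that carries content is asserted, not verified. Concretely you still need (a) injectivity of $\mathcal{A}$ on $C_1$, $C_3$ and $C_8$, so that vanishing of the alternation forces vanishing of the component; (b) the vanishing of $\mathcal{A}$ on $C_2$, $C_6$, $C_7$, $C_9$ and $C_{12}$; (c) the identities expressing $\mathcal{A}(\tau_4)$ and $\mathcal{A}(\tau_5)$ as the precise multiples of $\bigl(c(\tau_4)\circ\varphi\bigr)\wedge\phi$ and of $\eta\wedge\phi$ which, via \eqref{eq4}, sum to $-2\omega\wedge\phi$; and (d) the computation on the slot with exactly one entry equal to $\xi$, showing that after $\tau_6,\tau_7,\tau_8,\tau_9$ are accounted for the residual term is governed by $\tau_{10}$ and $\tau_{11}$ alone and vanishes precisely when $2\tau_{10}(X,Y,\xi)+\tau_{11}(\xi,X,Y)=0$. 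Item (d) is the only place the nonobvious linear relation in \textbf{ii)} can come from, and your text merely announces it. A smaller point: the closedness of $\omega$ is a consequence of \textbf{i)} in dimension at least five, not an ingredient of the equivalence, so your closing remark about using it ``implicitly'' is a red herring.
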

By \eqref{eq4} and Theorem \ref{thm1}, it follows that $\phi $ is closed if and only if $M$ is in the class $C_2 \oplus C_6 \oplus C_7 \oplus \bigoplus_{9\leq h \leq 12}C_h$ and $2 \tau_{10}(X,Y,\xi)+\tau_{11}(\xi, X, Y)=0.$

Let $(M',J,g')$ be and AH-manifold, $\dim M'=2n$ and denote by $\Omega$ the fundamental form,
\begin{equation}\label{eq8}
  \Omega(X,Y)=g'(X,JY).
\end{equation}
If $n \geq 2$ we call Lee form of $M'$ the 1-form
\begin{equation}\label{eq9}
  \beta = -\frac{1}{2(n-1)}\delta \Omega \circ J.
\end{equation}
We remark that $\omega'=-2\beta$ is named the Lee form of $(M',J,g')$ by I. Vaisman \cite{Va2}.

If $\dim M'\geq 6$, then $M'$ falls in the class $W_2 \oplus W_4$ if and only if there exists a $1-form$ $\beta$ such that
\begin{equation}\label{eq11}
  d\Omega=-2\beta \wedge \Omega.
\end{equation}
Moreover, \eqref{eq11} entails that $\beta$ is the Lee form and $\beta$ is closed.

In this section, we fix a contact-complex Riemannian submersion (contact-complex R. submersion) $\pi : (M, \varphi, \xi, \eta, g) \rightarrow (M', J, g')$, $\dim M=2m+1 \geq 5$, $\dim M' =2n$ and put $m=n+r$.
If $r=0$, any fibre $F$ of $\pi$ has dimension 1, $TF$ is spanned by $\xi\mid_F$. If $r \geq 1$, then any fibre
$(F,\,\, \hat{\varphi}=\varphi\mid_{TF},\,\, \hat{\xi}=\xi\mid_F, \,\,\hat{\eta} =\eta\mid_{TF},\,\, \hat{g}=g\mid_{TF\times TF})$
is an ACM-manifold.

We denote by $v$ the projection on the vertical distribution $\mathcal{V}=ker\pi_*$ and by $h$ the projection onto the horizontal distribution $\mathcal{H}.$ The O'Neill invariants $T$ and $A$ of $\pi$ are defined as:
\begin{eqnarray*}
T_EF&=&h(\nabla_{vE}vF)+v(\nabla_{vE}hF),\\
A_EF&=&v(\nabla_{hE}hF)+v(\nabla_{hE}vF),
\end{eqnarray*}
for any vector fields $E,F$ on $M$.

We refer to \cite{O}, \cite{Fal} for details on Riemannian submersions.
\begin{proposition}\label{Prop1}
Let $\pi: M \rightarrow M'$ be a contact-complex Riemannian submersion. Assume that $d\phi=-2\omega \wedge \phi$. Then, one has:
\begin{enumerate}
  \item $\delta \eta =0$,
  \item The invariant $A$ of $\pi$ satisfies:
  \begin{itemize}
    \item $A_XU=\eta(U)A_X \xi-\omega (\varphi U)\varphi X$,
    \item $A_XY=\eta(A_XY)\xi+\phi (X,Y)v (\varphi B)$,
    \item $A_{\varphi X}U-\varphi(A_XU)=\eta(U)(A_{\varphi X}\xi-\varphi (A_X\xi))$,
    \item $A_XY-A_{\varphi X}\varphi Y=\eta(A_XY-A_{\varphi X}\varphi Y)\xi$
  \end{itemize}
  for any $X,Y \in \chi^{h}(M)$, $U \in \chi^{v}(M)$.
\end{enumerate}
\end{proposition}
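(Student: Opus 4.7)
The contact-complex condition $J\circ\pi_{*}=\pi_{*}\circ\varphi$ forces both $\mathcal{H}$ and $\mathcal{V}$ to be $\varphi$-invariant; in particular $\pi_{*}\xi=0$, so $\xi\in\mathcal{V}$ and $\eta$ vanishes on $\mathcal{H}$, which makes $\phi$ vanish on mixed horizontal--vertical pairs. I would use these facts throughout. To prove $\delta\eta=0$, I would evaluate the hypothesis $d\phi=-2\omega\wedge\phi$ on the triple $(X,Y,\xi)$ with $X,Y$ basic horizontal lifts: in the Koszul expansion of $d\phi(X,Y,\xi)$, every term drops out. Indeed, $\phi(X,\xi)=\phi(Y,\xi)=0$ because $\varphi\xi=0$; $\xi\,\phi(X,Y)=0$ since $\phi(X,Y)=\Omega(\pi_{*}X,\pi_{*}Y)\circ\pi$ is fibrewise constant; and the three bracket pairings vanish because $[X,\xi]$, $[Y,\xi]$ are vertical (as $X$, $Y$ are basic) while $\varphi X,\varphi Y,\varphi\xi$ have the complementary types. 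The right-hand side equals $-2\omega(\xi)\phi(X,Y)$, so taking $Y=\varphi X$ with $|X|=1$ yields $\omega(\xi)=0$. Substituting $X=\xi$ into \eqref{eq4} gives $\omega(\xi)=\delta\eta/(2m)$, hence $\delta\eta=0$.

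For part (2), I would rerun the calculation on $(X,Y,U)$ with $X,Y$ basic horizontal and $U$ an arbitrary vertical field. Only the bracket term $-\phi([X,Y],U)=-2\,g(A_XY,\varphi U)$ survives (via the standard $v[X,Y]=2A_XY$), and comparing with $-2\omega(U)\phi(X,Y)$ gives
\[
  g(A_XY,\varphi U)=\omega(U)\phi(X,Y).
\]
Replacing $U$ by $\varphi U$, using $\varphi^{2}=-\mathrm{Id}+\eta\otimes\xi$, and rewriting $-\omega(\varphi U)=g(\varphi B,U)=g(v(\varphi B),U)$ via the skew-adjointness of $\varphi$, I would obtain the second bullet
\[
  A_XY=\eta(A_XY)\,\xi+\phi(X,Y)\,v(\varphi B).
\]
Tensoriality of $A$ extends this identity from basic lifts to arbitrary horizontal vector fields.

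The three remaining bullets are then algebraic. The first follows from the standard duality $g(A_XU,Y)=-g(U,A_XY)$ for $Y\in\mathcal{H}$: inserting the second bullet on the right and using $g(A_X\xi,Y)=-\eta(A_XY)$ together with $g(\varphi X,Y)=-\phi(X,Y)$ identifies $g(A_XU,Y)$ with $\eta(U)\,g(A_X\xi,Y)-\omega(\varphi U)\,g(\varphi X,Y)$, yielding $A_XU=\eta(U)A_X\xi-\omega(\varphi U)\varphi X$. For the fourth bullet, $\phi(\varphi X,\varphi Y)=\phi(X,Y)$ on horizontal vectors (from $\varphi^{2}=-\mathrm{Id}$ on $\mathcal{H}$), so the $v(\varphi B)$-components of $A_XY$ and $A_{\varphi X}\varphi Y$ coincide and their difference lies in $\mathbb{R}\xi$. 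For the third bullet, substituting the first bullet into $A_{\varphi X}U-\varphi(A_XU)$ and using once more $\varphi^{2}X=-X$ on horizontal $X$ cancels the $\omega(\varphi U)$-terms, leaving the required $\eta(U)(A_{\varphi X}\xi-\varphi(A_X\xi))$.

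The only genuine obstacle is the collapse of $d\phi(X,Y,U)$ to $-2\,g(A_XY,\varphi U)$: one must be methodical about the many mixed $\phi$-pairings that vanish and must work with \emph{basic} horizontal lifts so that $[X,U]$, $[Y,U]$ remain vertical. Once that key identity is established, everything else is signed bookkeeping with $\varphi^{2}=-\mathrm{Id}+\eta\otimes\xi$ and the skew-adjointness of $\varphi$ with respect to $g$.
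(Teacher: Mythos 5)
Your proposal is correct and follows essentially the same route as the paper: expand $d\phi$ on the mixed triples $(X,Y,U)$ with $X,Y$ basic to obtain the key identity $g(A_XY,\varphi U)=\omega(U)\phi(X,Y)$, specialize to $U=\xi$ together with \eqref{eq4} to get $\delta\eta=0$, and then deduce the four bullets by applying $\varphi^2=-\mathrm{Id}+\eta\otimes\xi$ and the skew-adjointness of $A_X$ and $\varphi$. The only (immaterial) difference is that you derive the formula for $A_XY$ first and recover $A_XU$ by duality, whereas the paper obtains $A_X\varphi U=\omega(U)\varphi X$ first; your write-up also usefully spells out the algebra the paper leaves as ``an easy consequence.''
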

\begin{proof}
  Let $X,Y$ be basic vector fields such that $\pi_*(X)=X'$, $\pi_*(Y)=Y'$ and $U$ be vertical. Then $[X,U], [Y,U]$ are vertical and
  \begin{equation*}
    2A_XY=v([X,Y]).
  \end{equation*}
It follows that
\begin{eqnarray*}
3d\phi(X,Y,U)&=&U(\phi(X,Y))+X( \phi(Y,U))+Y(\phi(U,X))\\
& & -\phi([X,Y],U)-\phi([Y,U],X)-\phi([U,X],Y)\\
&=& U(\phi(X,Y))-\phi([X,Y],U)\\
&=& \underbrace{\pi_*U}_{=0}(g'(X'JY'))-g([X,Y],\varphi U)\\
&=& -2g(A_XY,\varphi U).
\end{eqnarray*}
Being
\begin{equation*}
  3 d \phi (X,Y,U)=-2(\omega(X)\underbrace{\phi(Y,U)}_{=0}+\omega(Y)\underbrace{\phi(U,X)}_{=0}+\omega(U)\phi(X,Y))
\end{equation*}
one has
\begin{equation}\label{eq10}
  g(A_XY,\varphi U)=\omega(U)g(X,\varphi Y).
\end{equation}
By putting $U=\xi$ in \eqref{eq10} one has $\omega(\xi)=0$ and 1. is proved.
By \eqref{eq10}, we also have
\begin{equation*}
g(A_X\varphi U,Y)=\omega(U)g(\varphi X,Y), \,Y \,\,\text{basic}.
\end{equation*}
Since basic vector fields locally span the horizontal distribution, we obtain
\begin{eqnarray*}
 A_X \varphi U &=& \omega(U) \varphi X,\,U\in \chi^{v}(M).
 \end{eqnarray*}
 It follows that
 \begin{equation*}
  A_X\varphi^2 U = \omega(\varphi U) \varphi X.
\end{equation*}
So, we obtain the first relation in 2., for any basic $X$ and then for any horizontal vector field $X$. The other relations are an easy consequence of the stated formulas.
\end{proof}

\begin{remark}
Since the $C_5$-component of $\nabla \phi$ is determined by $\delta \eta,$
by Proposition \ref{Prop1} one has that if $(M,\varphi, \xi, \eta, g) $ is the total space of $\pi$ and $d\phi =-2\omega \wedge \phi$, then $M$ falls in the class
\begin{equation*}
  C_2 \oplus C_4 \oplus C_6 \oplus C_7 \oplus \bigoplus_{9\leq h \leq 12}C_h \,\text{and}\, \, 2\tau_{10}(X,Y,\xi)+\tau_{11}(\xi, X, Y)=0.
\end{equation*}
\end{remark}
In particular we have: $d\phi=0$ ($M$ is almost quasi-Sasakian) if and only if $M$ is in $C_2 \oplus C_6 \oplus C_7 \oplus \bigoplus_{9\leq h \leq 12}C_h \,\text{and} \,\, 2\tau_{10}(X,Y,\xi)+\tau_{11}(\xi, X, Y)=0.$

In this case, by Proposition \ref{Prop1}, the invariant $A$ satisfies
\begin{equation*}
  A_XU=\eta(U)A_X\xi, \quad A_XY=\eta(A_XY)\xi,
\end{equation*}
so $A_X\xi, X\in \chi^h(M), $ determines $A.$
\begin{proposition}\label{Prop2}
Let $(F,\hat{\varphi}, \hat{\xi}, \hat{\eta}, \hat{g})$ be a leaf of the contact-complex Riemannian submersion $\pi: M \rightarrow M'$, $\dim F=2r+1\geq 3$. Assume that $d\phi=-2\omega \wedge \phi.$ Then, we have
\begin{enumerate}
  \item $\hat{\delta }\hat{\eta}$=0,
  \item If $r\geq 2$, the fundamental form $\hat{\phi}$ of $F$ satisfies $d\hat{\phi}=-2\hat{\omega} \wedge \hat{\phi}$ and $\hat{\omega}=\omega\mid_{TF}$ is the Lee form of $F$. \\
      If $r=1$ the Lee form $\hat{\omega}$ of $F$ acts as $\hat{\omega}(U)=(\nabla_\xi\eta)U$, for any $U$ tangent to $F$.
\end{enumerate}
\end{proposition}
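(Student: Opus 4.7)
The plan is to exploit that $\xi$ is vertical for any contact-complex Riemannian submersion (since $J\circ\pi_* = \pi_*\circ\varphi$ and $\varphi\xi = 0$ force $\pi_*\xi = 0$), so each leaf $F$ inherits the ACM-structure described in the excerpt and its Levi-Civita connection satisfies $\hat\nabla_U V = v(\nabla_U V)$ for vertical $U,V$. A convenient byproduct is that $\eta = g(\cdot,\xi)$ annihilates horizontal vectors, so $\eta(T_U V) = 0$ and $\hat\eta(U) = \eta(U)$ on vertical arguments.

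For item 1, I would pick a local orthonormal frame $\{e_1,\ldots,e_{2r+1}\}$ on $F$ and complete it with horizontal vectors $\{e_{2r+2},\ldots,e_{2m+1}\}$ to a local frame of $M$. For $1\le i\le 2r+1$, the identity $(\hat\nabla_{e_i}\hat\eta)(e_i) = e_i(\eta(e_i)) - \eta\bigl(v(\nabla_{e_i}e_i)\bigr) = (\nabla_{e_i}\eta)(e_i)$ holds because the horizontal part of $\nabla_{e_i}e_i$ is killed by $\eta$. Summing gives
\[ \hat\delta\hat\eta - \delta\eta = \sum_{j=2r+2}^{2m+1}(\nabla_{e_j}\eta)(e_j). \]
For horizontal $X$ one has $(\nabla_X\eta)(X) = -\eta(\nabla_X X) = -\eta(A_X X)$, and the standard skew-symmetry $A_X Y = -A_Y X$ for horizontal pairs (O'Neill) forces $A_X X = 0$. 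Combined with $\delta\eta = 0$ from Proposition \ref{Prop1}, this yields $\hat\delta\hat\eta = 0$.

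For item 2 with $r\ge 2$, I would pull back $d\phi = -2\omega\wedge\phi$ along the inclusion $\iota:F\hookrightarrow M$: since $\iota^*\phi = \hat\phi$, one obtains $d\hat\phi = -2(\omega|_{TF})\wedge\hat\phi$. Because $\dim F = 2r+1\ge 5$, the uniqueness statement recorded just before Theorem \ref{thm1} applies to $F$ and forces $\omega|_{TF}$ to be closed and to coincide with the Lee form $\hat\omega$ of $F$. For $r=1$ (so $\dim F=3$), formula \eqref{eq3} reads $\hat\omega = \hat\nabla_{\hat\xi}\hat\eta + \tfrac{1}{2}\hat\delta\hat\eta\,\hat\eta$; item 1 eliminates the second term, and for vertical $U$ the identity $\hat\nabla_{\hat\xi}U = v(\nabla_\xi U)$ gives $(\hat\nabla_{\hat\xi}\hat\eta)(U) = \xi(\eta(U)) - \eta(\nabla_\xi U) = (\nabla_\xi\eta)(U)$, as asserted.

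The only mildly delicate point is keeping straight the distinction between the intrinsic objects on $F$ (the connection $\hat\nabla$, codifferential $\hat\delta$, and Lee form $\hat\omega$) and their ambient counterparts; the O'Neill decomposition together with the verticality of $\xi$ reduces everything to a frame-based bookkeeping, with Proposition \ref{Prop1} (for $\delta\eta = 0$) and the skew-symmetry of $A$ on horizontal pairs providing the only nontrivial inputs.
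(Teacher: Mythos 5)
Your proof is correct. Item 1 and the $r=1$ case follow essentially the same path as the paper: an adapted orthonormal frame, the vanishing of $\eta$ on horizontal vectors, the alternation $A_XX=0$, and $\delta\eta=0$ from Proposition \ref{Prop1} give $\hat\delta\hat\eta=\delta\eta=0$; and for $\dim F=3$ the formula \eqref{eq3} on $F$ reduces to $\hat\omega=\hat\nabla_{\hat\xi}\hat\eta$, which agrees with $(\nabla_\xi\eta)\mid_{TF}$ by the Gauss formula. Where you genuinely diverge is item 2 for $r\ge 2$: you first restrict $d\phi=-2\omega\wedge\phi$ along $\iota:F\hookrightarrow M$ to get $d\hat\phi=-2(\omega\mid_{TF})\wedge\hat\phi$ and then invoke the uniqueness statement recorded after \eqref{eq7} (in dimension $\ge 5$ a $1$-form satisfying that relation is automatically the Lee form) to conclude $\omega\mid_{TF}=\hat\omega$. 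The paper goes the other way round: it computes $\hat\omega$ directly from the definition \eqref{eq2}, splitting $\delta\phi(\varphi U)$ into horizontal and vertical sums, using $A_{e_i}\varphi e_i=\phi(e_i,\varphi e_i)\,v(\varphi B)$ from Proposition \ref{Prop1} to evaluate the horizontal contribution as $-2n\,\omega(U)$, and solving $(m-1)\omega(U)=n\omega(U)+(r-1)\hat\omega(U)$ with $m=n+r$; the identity $d\hat\phi=-2\hat\omega\wedge\hat\phi$ is then obtained by restriction, as in your argument. Your route is shorter and cleaner but leans on the quoted uniqueness fact as a black box; the paper's computation is self-contained within its own toolkit and, as a byproduct, makes explicit how the invariant $A$ and the Lee vector field enter the codifferential of $\phi$ on the fibre. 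Both are valid proofs.
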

\begin{proof}
  We consider a local orthonormal frame on $M$ \[\{e_1, ... , e_{2n}, v_1,...,v_{2r},\xi\}\]
  such that $\{v_1,...,v_{2r}\}$ are tangent to $F.$ In this case, $\{e_1, ... , e_{2n}\}$ are horizontal. We have
  \begin{eqnarray*}
\delta \eta &=& -\sum_{i=1}^{2n}g(\nabla_{e_i}\xi,e_i)-\sum_{i=1}^{2r}g(\nabla_{v_i}\xi,v_i)\\
&=&\sum_{i=1}^{2n}g(\underbrace{A_{e_i}e_i}_{=0},\xi)-\sum_{i=1}^{2r}\hat{g}(\hat{\nabla}_{v_i}\hat{\xi},v_i)\\
&=& \hat{\delta }\hat{\eta}.
  \end{eqnarray*}
  So, 1. follows from Proposition \ref{Prop1}.

Now, assume $r\geq 2$ and, applying Proposition \ref{Prop1}, for every $U \in TF$ we have
\begin{eqnarray*}
\omega(U)&=& -\frac{1}{2(m-1)}(\delta \phi(\varphi U)+(\nabla_\xi\eta)U)\\
&=& -\frac{1}{2(m-1)}\bigg( \sum_{i=1}^{2n}g((\nabla_{e_i}\varphi)e_i,\varphi U)+   \sum_{i=1}^{2r} g((\nabla_{v_i}\varphi)v_i,\varphi U)\bigg)\\
&=& -\frac{1}{2(m-1)}\bigg(\sum_{i=1}^{2n}g(A_{e_i}\varphi e_i,\varphi U)+\sum_{i=1}^{2r} g((\hat{\nabla}_{v_i}\hat{\varphi})v_i,\hat{\varphi} U)\bigg)\\
&=& -\frac{1}{2(m-1)}\bigg(\sum_{i=1}^{2n}\phi(e_i,\varphi e_i)g(\varphi B, \varphi U)-2(r-1)\hat{\omega}(U)\bigg)\\
&=& \frac{1}{m-1}(n \omega(U)+(r-1)\hat{\omega}(U)).
\end{eqnarray*}
Being $m=n+r$, we have $\omega(U)=\hat{\omega}(U).$ It follows that $\hat{\omega}=\omega\mid_{TF}.$\\
The equality $d\hat{\phi}=-2\hat{\omega} \wedge \hat{\phi}$ follows from $d\phi=-2\omega \wedge \phi.$\\
Finally, if $\dim F=3$, since $\hat{\delta}\hat{\eta}=0$, we have $\hat{\omega}=\hat{\nabla}_{\hat{\xi}}\hat{\eta}$ and then for any $U \in TF$ $\hat{\omega}(U)=(\nabla_{\xi}\eta)(U).$
\end{proof}
\begin{proposition}\label{Prop3}
Let $\pi : M \rightarrow M'$ be a contact-complex Riemannian submersion such that $\dim M=2m+1 \geq 5$, $\dim M'=2n \geq 4$. Assume that $d\phi=-2\omega \wedge \phi$. Then, the Lee form $\beta$ of $M$ and $M'$ satisfies
\begin{equation}\label{eq12}
\beta(\pi_*(X))=\omega(X),
\end{equation}
for any basic vector field $X$. Moreover, $(M',J,g')$ is in the Gray-Hervella class $W_2 \oplus W_4$.
\end{proposition}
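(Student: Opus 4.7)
The plan is to transfer the equation $d\phi = -2\omega \wedge \phi$ from $M$ to the base $M'$ by evaluating it on triples of basic horizontal vector fields, and then invoke the characterization of the Gray--Hervella class $W_2 \oplus W_4$ recalled after \eqref{eq11}. First I would observe that $J \circ \pi_* = \pi_* \circ \varphi$ together with $\ker \pi_* = \mathcal{V}$ forces $\varphi$ to preserve $\mathcal{V}$ (since $\pi_*(\varphi U) = J \pi_* U = 0$ for any vertical $U$) and hence, by metric compatibility, to preserve $\mathcal{H}$ as well. Consequently, for basic $X, Y$ with $\pi_* X = X'$ and $\pi_* Y = Y'$, the field $\varphi Y$ is basic horizontal with $\pi_*(\varphi Y) = J Y'$, so $\phi(X, Y) = \Omega(X', Y') \circ \pi$.

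Next I would compute $3\, d\phi(X, Y, Z)$ for basic horizontal $X, Y, Z$ in two ways. Using the invariant formula for $d\phi$ and the fact that $\phi(v[X,Y], Z) = g(v[X,Y], \varphi Z) = 0$ because $\varphi Z$ is horizontal, only the basic horizontal parts of the brackets survive, and since each $h[X,Y]$ is the basic lift of $[X',Y']$ one obtains
\begin{equation*}
3\, d\phi(X, Y, Z) = 3\, d\Omega(X', Y', Z') \circ \pi .
\end{equation*}
On the other hand, expanding $-2\, \omega \wedge \phi$ via the pushdown of $\phi$ from the previous step gives
\begin{equation*}
3\, d\phi(X, Y, Z) = -2 \bigl[ \omega(X)\,\Omega(Y', Z') + \omega(Y)\,\Omega(Z', X') + \omega(Z)\,\Omega(X', Y') \bigr] \circ \pi .
\end{equation*}

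The main obstacle will be showing that $\omega$, evaluated on basic horizontal fields, descends to a well-defined 1-form $\beta$ on $M'$. The left-hand side of the resulting identity is a pullback and hence constant along fibres, so differentiating the right-hand side along any vertical $U$ and using that each factor $\Omega(\cdot, \cdot) \circ \pi$ is annihilated leads to
\begin{equation*}
U(\omega(X))\,\Omega(Y', Z') + U(\omega(Y))\,\Omega(Z', X') + U(\omega(Z))\,\Omega(X', Y') = 0 .
\end{equation*}
Since $2n \geq 4$, given $X'$ at a point of $M'$ I can choose $Y', Z'$ spanning a $J$-invariant 2-plane orthogonal to $\mathrm{span}\{X', JX'\}$, so that $\Omega(Y', Z') \neq 0$ while $\Omega(X', Y') = \Omega(Z', X') = 0$; this forces $U(\omega(X)) = 0$ for every basic $X$ and every vertical $U$. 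Hence $\beta(\pi_* X) := \omega(X)$ is a well-defined 1-form on $M'$, and substituting back yields $d\Omega = -2\, \beta \wedge \Omega$.

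To conclude, I would invoke the characterization recalled after \eqref{eq11}: for $\dim M' \geq 6$ this immediately identifies $M'$ as lying in $W_2 \oplus W_4$ with $\beta$ as its Lee form, while for $\dim M' = 4$ the class $W_2 \oplus W_4$ already exhausts all AH-manifolds and the identification of $\beta$ with the Lee form follows from the uniqueness of a 1-form satisfying $d\Omega = -2\,\beta\wedge\Omega$, a consequence of the injectivity of $\wedge \Omega \colon \Lambda^1 \to \Lambda^3$ which holds for $n \geq 2$.
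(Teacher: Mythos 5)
Your proposal is correct, but it reaches \eqref{eq12} by a genuinely different route than the paper. The paper takes the Lee form $\beta$ of $M'$ as already defined intrinsically by \eqref{eq9}, expresses it through the trace identity \eqref{new1}, $4(n-1)\beta(X')=3\sum_{i}d\Omega(e_i',Je_i',X')$ (stated without proof), pulls this back along $\pi$ using $d\Omega(\cdot,\cdot,\cdot)\circ\pi=d\phi$ on basic lifts, and evaluates the sum with $d\phi=-2\omega\wedge\phi$ to obtain $4(n-1)\beta(X')=4(n-1)\omega(X)$ in one stroke; the relation $d\Omega=-2\beta\wedge\Omega$ then follows as in your last display. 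You instead first descend $d\phi=-2\omega\wedge\phi$ to $M'$, which obliges you to prove that $\omega$ on basic fields is projectable: your pointwise argument, choosing a $J$-invariant $2$-plane orthogonal to $\mathrm{span}\{X',JX'\}$ (possible precisely because $2n\geq 4$), handles this cleanly, and you then identify the descended form with the Lee form via the characterization recalled after \eqref{eq11}. Your route avoids the unproved trace identity at the cost of the descent step and a case split at $\dim M'=4$. One small point there: uniqueness of a solution of $d\Omega=-2\beta\wedge\Omega$ identifies your $\beta$ with the Lee form only once one also knows that the Lee form itself satisfies this equation in dimension four; this is true because $W_1=W_3=0$ in that dimension, so $d\Omega$ reduces to its $W_4$-component, but it should be cited explicitly. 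With that remark added, both arguments establish the same conclusion.
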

\begin{proof}
  Let $\{e_1, ..., e_{2n},u_1,...,u_r,\varphi u_1,...,\varphi u_r, \xi\}$ be a local orthonormal frame on $M$ such that $\{e_1, ..., e_{2n}\}$ are basic. Put $e_i'=\pi_*(e_i),\, i=1,...,2n,$ so that $\{e_1', ..., e_{2n}'\}$ is a local orthonormal frame on $M'.$

Given a basic vector field $X$, we put $X'=\pi_* X$. Applying the equality
\begin{equation}\label{new1}
  4(n-1)\beta(X')=3 \sum_{i=1}^{2n}d\omega(e_i',Je_i',X'),
\end{equation}
and the hypothesis $d\phi=-2\omega \wedge \phi$, we obtain
\begin{eqnarray*}
4(n-1)\beta(X')&=&3\sum_{i=1}^{2n}d\omega(e_i',Je_i',X')=3\sum_{i=1}^{2n}d\phi(e_i,\varphi e_i,X)\\
&=&-2\sum_{i=1}^{2n}\bigg(\omega(e_i)\phi(\varphi e_i, X)+\omega(\varphi e_i)\phi(X,e_i)\bigg)+4n\omega(X)\\
&=&-4\sum_{i=1}^{2n}\omega(e_i)g(e_i,X)+4n\omega(X)=4(n-1)\omega(X).
\end{eqnarray*}
Being $n\geq 2$, it follows that $\beta(X')=\omega(X)$.\\
Finally, given $X',Y',Z' \in \chi(M')$, let $X,Y,Z$ be their horizontal lifts. Then, we have
\begin{eqnarray*}
d\Omega(X',Y',Z')&=& d\phi(X,Y,Z)=-2(\omega \wedge \phi)(X,Y,Z)\\
&=& -2(\beta \wedge \Omega)(X',Y',Z').
\end{eqnarray*}
It follows that $d\Omega=-2\beta \wedge \Omega$ and $M'$ is a $W_2 \oplus W_4$-manifold.
\end{proof}
\begin{proposition}\label{Prop4}
Let $\pi: M \rightarrow M'$ be a contact-complex Riemannian submersion. Assume that $d\phi=-2\omega \wedge \phi.$ For any $X \in \chi^h(M)$, $U \in \chi^v(M)$ one has
\begin{equation}\label{eq15}
         h((\nabla_X \varphi)U)=\omega(U)\varphi X- \omega(\varphi U) X-\eta(U)\varphi(A_X\xi),
        \end{equation}
 \begin{equation}\label{eq16}
          h((\nabla_U\varphi)X)=\eta(U)(A_{\varphi X}\xi-\varphi(A_X\xi)).
        \end{equation}
\end{proposition}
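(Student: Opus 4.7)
The plan is to expand $(\nabla_X\varphi)U=\nabla_X(\varphi U)-\varphi(\nabla_X U)$ and $(\nabla_U\varphi)X=\nabla_U(\varphi X)-\varphi(\nabla_U X)$, project onto the horizontal distribution, and reduce everything to $A$-tensor expressions that are already evaluated in Proposition \ref{Prop1}.

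The preliminary ingredient I would record is that $\varphi$ preserves both distributions: since $\xi$ is vertical and $J\pi_*=\pi_*\varphi$, for any $V\in\chi^v(M)$ one has $\pi_*(\varphi V)=J\pi_* V=0$, so $\varphi V\in\chi^v(M)$, and then for $X\in\chi^h(M)$, $U\in\chi^v(M)$ the identity $g(\varphi X,U)=-g(X,\varphi U)=0$ forces $\varphi X\in\chi^h(M)$. Consequently $h\circ\varphi=\varphi\circ h$ on every vector field, and for horizontal $X$ and vertical $V$ the standard O'Neill identity $h(\nabla_X V)=A_X V$ applies.

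For \eqref{eq15}, since $\varphi U$ is vertical this gives $h(\nabla_X(\varphi U))=A_X(\varphi U)$ and $h(\varphi(\nabla_X U))=\varphi(A_X U)$, so
\[
h((\nabla_X\varphi)U)=A_X(\varphi U)-\varphi(A_X U).
\]
Proposition \ref{Prop1} evaluates $A_X(\varphi U)$ using $\eta(\varphi U)=0$, $\varphi^2 U=-U+\eta(U)\xi$ and the already-established $\omega(\xi)=0$, producing $\omega(U)\varphi X$; it evaluates $\varphi(A_X U)$ using $\varphi^2 X=-X$ (since $\eta(X)=0$), producing $\eta(U)\varphi(A_X\xi)+\omega(\varphi U)X$. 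Subtracting yields \eqref{eq15}.

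For \eqref{eq16}, I would take $X$ basic (the identity is $C^\infty(M)$-linear in $X$, so this is harmless). Then $\varphi X$ is basic as well, and both $[U,X]$ and $[U,\varphi X]$ are vertical, which lets me replace $\nabla_U X$ by $\nabla_X U$ and $\nabla_U(\varphi X)$ by $\nabla_{\varphi X}U$ up to vertical corrections. Thus $h(\nabla_U X)=A_X U$ and $h(\nabla_U(\varphi X))=A_{\varphi X}U$, giving
\[
h((\nabla_U\varphi)X)=A_{\varphi X}U-\varphi(A_X U).
\]
A second application of Proposition \ref{Prop1} produces $\eta(U)A_{\varphi X}\xi+\omega(\varphi U)X-\eta(U)\varphi(A_X\xi)-\omega(\varphi U)X$, in which the two $\omega(\varphi U)X$ terms cancel to yield \eqref{eq16}. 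There is no serious obstacle; the only care required is tracking the $\eta(U)\xi$ correction that appears in $\varphi^2U$ but not in $\varphi^2X$, together with the use of $\omega(\xi)=0$, which is precisely what makes the $\omega(\varphi U)X$ cancellation in \eqref{eq16} clean.
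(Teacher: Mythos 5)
Your proposal is correct and follows essentially the same route as the paper: both reduce $h((\nabla_X\varphi)U)$ to $A_X\varphi U-\varphi(A_XU)$ and $h((\nabla_U\varphi)X)$ (for basic $X$) to $A_{\varphi X}U-\varphi(A_XU)$, then invoke the formulas of Proposition \ref{Prop1}. The only cosmetic difference is that the paper cites the third identity of Proposition \ref{Prop1} directly for \eqref{eq16}, whereas you re-derive it from the first identity; this changes nothing.
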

\begin{proof}
  By Proposition \ref{Prop1}, given $X,Y \in \chi^h(M)$, $U \in \chi^v(M)$, one has
  \begin{eqnarray*}
g((\nabla_X\varphi)U,Y)&=&g(A_X\varphi U-\varphi(A_XU),Y)\\
&=&-\omega(\varphi^2U)g(\varphi X,Y)-g(\eta(U)\varphi(A_X\xi)-\omega(\varphi U)\varphi^2X,Y)\\
&=&g(\omega(U)\varphi X-\eta(U)\varphi(A_X\xi)-\omega(\varphi U)X,Y).
\end{eqnarray*}
  Thus, we obtain \eqref{eq15}. Moreover, if $X$ is basic, by Proposition \ref{Prop1} we get
\begin{eqnarray*}
h((\nabla_U \varphi)X)&=&h(\nabla_{U}\varphi X-\varphi (A_XU))\\
&=&A_{\varphi X}U-\varphi(A_XU)=\eta(U)(A_{\varphi X}\xi-\varphi (A_X\xi)).
\end{eqnarray*}
Then, one gets \eqref{eq16}, since basic vector fields locally span the horizontal distribution.
\end{proof}
The next result allows us to express the vertical components of $(\nabla_X \varphi)U,\,(\nabla_U \varphi)X$ in terms of the invariant $T$, the Lee form $\omega$ and the Nijenhuis tensor field $N_\varphi $ of $\varphi$, which is denoted by $N_\varphi$, acting as
\begin{equation}\label{eq17}
N_\varphi(X,Y)=\varphi^2[X,Y]+[\varphi X, \varphi Y]-\varphi [X, \varphi Y]-\varphi[\varphi X,Y],\,\, X,Y \in \chi(M).
\end{equation}
\begin{proposition}\label{Prop5}
Let $\pi: M \rightarrow M'$ be a contact-complex Riemannian submersion. Assume that $d\phi=-2\omega \wedge \phi.$ Then, we have
\begin{equation}\label{eq18}
  v((\nabla_X \varphi)U)=-\eta(U)\varphi (T_\xi X)-g(T_\xi X, \varphi U)\xi,\, X\in \chi^h(M),\, U \in \chi^v(M),
\end{equation}
\begin{eqnarray}\label{eq19}
  v((\nabla_U \varphi)X)&=&\eta(U)(T_\xi \varphi X-\varphi(T_\xi X))+\omega(X)\varphi U+\omega(\varphi X)\varphi^2 U \nonumber\\
  & &+\frac{1}{2}N_\varphi (X, \varphi U), \,\, X \in \chi^h(M),\, U \in \chi^v(M)
\end{eqnarray}
The invariant $T$ satisfies:
\begin{equation}\label{eq20}
T_U \varphi V - T_{\varphi U}V=\eta(U)T_{\varphi V} \xi -\eta(V)T_{\varphi U}\xi+2 \phi(U,V)h(B),\,U,V \in \chi^v(M).
\end{equation}
\end{proposition}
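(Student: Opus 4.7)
My plan is to prove (18) first, then (19), and deduce (20) from (18) via the Cartan formula for $d\phi$. Throughout, $X$ is taken basic horizontal and each identity is extended by tensoriality in $X$.

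For (18), write $(\nabla_X\varphi)U = \nabla_X\varphi U - \varphi\nabla_X U$; since $\varphi$ preserves the splitting $TM = \mathcal{H}\oplus\mathcal{V}$, one has $v((\nabla_X\varphi)U) = v(\nabla_X\varphi U) - \varphi\,v(\nabla_X U)$. The preliminary observation is $[X,\xi]\in\mathrm{span}(\xi)$ for basic horizontal $X$: expanding $d\phi(X,\xi,V) = -2\omega(X)\phi(\xi,V) = 0$ via Cartan's formula for any vertical $V$ leaves only $-g([X,\xi],\varphi V) = 0$, so $[X,\xi]\perp\varphi\mathcal{V}$; combined with the orthogonal decomposition $\mathcal{V} = \mathrm{span}(\xi)\oplus\varphi\mathcal{V}$ this forces $[X,\xi]\in\mathrm{span}(\xi)$, in particular $\varphi[X,\xi]=0$. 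Pairing $(\nabla_X\varphi)U$ with $\xi$, using $\varphi\xi=0$ and $v(\nabla_X\xi) = T_\xi X + [X,\xi]$, yields $g((\nabla_X\varphi)U,\xi) = -g(T_\xi X,\varphi U)$, matching the $\xi$-coefficient of (18). For the $\varphi\mathcal{V}$-component, the direct computation $v((\nabla_X\varphi)\xi) = -\varphi T_\xi X$ combined with the splitting $U = \eta(U)\xi + U_0$ reduces the problem to showing $v((\nabla_X\varphi)U_0) \in \mathrm{span}(\xi)$ when $\eta(U_0)=0$, which by the $g$-skew of $\nabla_X\varphi$ is equivalent to $g(U_0,v((\nabla_X\varphi)V_0)) = 0$ for $\eta(V_0)=0$; this last follows from the cyclic expansion of $d\phi(X,U_0,V_0) = -2\omega(X)\phi(U_0,V_0)$ combined with the $\xi$-component of (18) already established.

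For (19), the vertical part of $(\nabla_U\varphi)X$ is immediately $T_U\varphi X - \varphi T_U X$, since $T_U$ sends horizontal to vertical. Decomposing $U = \eta(U)\xi + U_0$ separates the summand $\eta(U)(T_\xi\varphi X - \varphi T_\xi X)$. For the $U_0$-piece, expand
\[
N_\varphi(X,\varphi U_0) = \varphi^2[X,\varphi U_0] + [\varphi X,\varphi^2 U_0] - \varphi[X,\varphi^2 U_0] - \varphi[\varphi X,\varphi U_0]
\]
using $\varphi^2 = -\mathrm{id} + \eta\otimes\xi$ together with $\varphi[X,\xi] = \varphi[\varphi X,\xi] = 0$ from the proof of (18). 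The Lee-form contributions $\omega(X)\varphi U$ and $\omega(\varphi X)\varphi^2 U$ arise from Proposition~\ref{Prop1}(2), specifically from the identity $A_{\varphi X}U = \omega(\varphi U)X + \eta(U)A_{\varphi X}\xi$ (a direct corollary of $A_X U = \eta(U)A_X\xi - \omega(\varphi U)\varphi X$); collecting everything reproduces (19).

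For (20), apply the Cartan identity
\[
d\phi(X,U,V) = (\nabla_X\phi)(U,V) + (\nabla_U\phi)(V,X) + (\nabla_V\phi)(X,U) = -2\omega(X)\phi(U,V)
\]
for $X$ horizontal and $U,V$ vertical. Using the $g$-skew of $T$ and of $\varphi$, $(\nabla_U\phi)(V,X) = g(\varphi T_U V - T_U\varphi V, X)$ and analogously for $(\nabla_V\phi)(X,U)$; the symmetry $T_U V = T_V U$ cancels the $\varphi T$ terms and leaves $g(T_{\varphi U}V - T_U\varphi V, X)$. Substituting (18) into $(\nabla_X\phi)(U,V) = g(U,v((\nabla_X\varphi)V))$, using $T_{\varphi V}\xi = T_\xi\varphi V$ and $g(h(B),X) = \omega(X)$ for horizontal $X$, and rearranging yields (20). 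The main obstacle is the $\varphi\mathcal{V}$-component of (18) when $\eta(U)=0$: the Cartan $d\phi$-identity alone is compatible with but does not single out (18), as it encodes only the cyclic relation (20). The decisive input that closes the system is the combination of the algebraic fact $[X,\xi]\in\mathrm{span}(\xi)$, the direct evaluation of $v((\nabla_X\varphi)\xi)$, and the $g$-skew of the tensor $\nabla_X\varphi$.
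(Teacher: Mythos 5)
Your treatment of the $\xi$-components of \eqref{eq18} is sound: the observation that $[X,\xi]\in\mathrm{span}(\xi)$ for basic $X$ (from $d\phi(X,\xi,V)=0$), hence $v((\nabla_X\varphi)\xi)=-\varphi(T_\xi X)$ and $g((\nabla_X\varphi)U,\xi)=-g(T_\xi X,\varphi U)$, recovers what the paper obtains via \eqref{eq22}. The genuine gap is exactly where you flag the \enquote{main obstacle}: the component $g((\nabla_X\varphi)U_0,V_0)$ for $U_0,V_0\in\mathcal{V}\cap\ker\eta$. The cyclic expansion of $3d\phi(X,U_0,V_0)$ yields only the paper's \eqref{eq21}, namely $g((\nabla_X\varphi)U_0,V_0)=-g(T_{U_0}\varphi V_0-T_{V_0}\varphi U_0,X)+2\omega(X)\phi(U_0,V_0)$, so concluding that this vanishes is \emph{equivalent} to the $\eta=0$ case of \eqref{eq20} --- which you propose to derive afterwards \emph{from} \eqref{eq18}. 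That is circular, and the three inputs you name as closing the system ($[X,\xi]\in\mathrm{span}(\xi)$, the value of $v((\nabla_X\varphi)\xi)$, and the skew-symmetry of $\nabla_X\varphi$) do not suffice: they only constrain the $\ker\eta$-block of $v\circ(\nabla_X\varphi)$ to be skew-symmetric (and to anti-commute with $\varphi$ there), which does not force it to vanish.

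The paper supplies the missing independent input by computing $g((\nabla_X\varphi)U,V)$ a second way, in terms of the Lie derivative, obtaining $T_{\varphi U}X+\omega(X)\varphi U=-\tfrac12(\mathfrak{L}_X\varphi)U$ (its \eqref{eq24}), and then exploiting the algebraic identity for $(\mathfrak{L}_X\varphi)\varphi U+\varphi((\mathfrak{L}_X\varphi)U)$ that follows from $\varphi^2=-I+\eta\otimes\xi$ together with \eqref{eq22}; this produces \eqref{eq100}, hence \eqref{eq20}, and only then \eqref{eq18}. Your proposal contains no substitute for this step, so \eqref{eq18} and \eqref{eq20} remain unproved. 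A secondary error: in your argument for \eqref{eq19} you attribute the terms $\omega(X)\varphi U+\omega(\varphi X)\varphi^2U$ to the $A$-tensor identities of Proposition \ref{Prop1}; but $v((\nabla_U\varphi)X)=T_U\varphi X-\varphi(T_UX)$ involves only $T$, and those terms in fact come from the same Lie-derivative relation \eqref{eq25}, not from $A$.
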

\begin{proof}
  Given $U,V \in \chi^v(M)$, $X \in \chi^h(M)$, $X$ basic, we have
  \begin{eqnarray*}
-2\omega(X)\phi(U,V)&=&3d\phi(U,V,X)\\
&=&-g((\nabla_U \varphi)V,X)+g((\nabla_V \varphi)U,X)-g((\nabla_X \varphi)U,V)\\
&=&-g(T_U \varphi V-T_V \varphi U,X)-g((\nabla_X \varphi)U,V)
  \end{eqnarray*}
  and then
  \begin{equation}\label{eq21}
    g((\nabla_X \varphi)U,V)=-g(T_U \varphi V-T_V \varphi U,X)+2\omega(X)\phi(U,V).
  \end{equation}
  In particular, we obtain:
  \begin{equation*}
    g((\nabla_X \varphi)\xi,V)=-g(T_\xi \varphi V, X)=g(T_\xi X, \varphi V).
  \end{equation*}
  This relation entails
  \begin{equation}\label{eq22}
    v(\nabla_X \xi)=T_\xi X- \eta(T_\xi X)\xi.
  \end{equation}
  Moreover, by direct calculation, we have
  \begin{eqnarray*}
g((\nabla_X \varphi)U,V)&=&g(\nabla_{\varphi U}X,V)+g(\nabla_U X,\varphi V)+g([X,\varphi U]-\varphi [X,U],V)\\
&=&g(T_{\varphi U}X,V)+g(T_UX,\varphi V)+g((\mathfrak{L}_X\varphi)U,V)\\
&=&-g(T_U\varphi V+T_{\varphi U}V,X)+g((\mathfrak{L}_X\varphi)U,V).
  \end{eqnarray*}
  Hence applying \eqref{eq21}, we have
  \begin{equation}\label{eq23}
    g(T_{\varphi U}X,V)+\omega(X)g(\varphi U,V)=-\frac{1}{2}g((\mathfrak{L}_X\varphi)U,V).
  \end{equation}
  We remark that, being $X$ is basic, $(\mathfrak{L}_X\varphi)U$ is vertical, so \eqref{eq23} implies
  \begin{equation}\label{eq24}
    T_{\varphi U}X+\omega(X)\varphi U=-\frac{1}{2}(\mathfrak{L}_X\varphi)U,\, U \in \chi^v(M),\,X \in \chi^b(M).
  \end{equation}
  Then, we also obtain
  \begin{equation}\label{eq25}
    T_{\varphi^2U}X+\omega(X)\varphi^2U=-\frac{1}{2}(\mathfrak{L}_X\varphi)\varphi U, \, U \in \chi^v(M),\,X \in \chi^b(M).
  \end{equation}
  Furthermore, a direct computation and \eqref{eq22} give
  \begin{eqnarray*}
(\mathfrak{L}_X\varphi)\varphi U+\varphi((\mathfrak{L}_X\varphi) U)&=&g(\nabla_X \xi,U)\xi+\eta(U)[X,\xi]+g(\nabla_U X, \xi)\xi\\
&=&(2g(T_\xi X,U)-\eta(T_\xi X)\eta(U))\xi+\eta(U)[X,\xi].
  \end{eqnarray*}
  So applying \eqref{eq24} and \eqref{eq25}, one has
  \begin{equation*}
T_{\varphi^2 U}X+\varphi(T_{\varphi U}X)+2\omega(X)\varphi^2U=-\frac{1}{2}(2g(T_\xi X,U)-\eta(T_\xi X)\eta(U))\xi -\frac{1}{2}\eta(U)[X,\xi].
  \end{equation*}
  Equivalently, we have
  \begin{eqnarray}\label{eq100}
T_UX-\varphi(T_{\varphi U}X)&=&-2\omega(X)\big(U-\eta(U)\xi\big)+\eta(U)T_\xi X\nonumber\\
& &+\frac{1}{2}(2g(T_\xi X,U)-\eta(T_\xi X)\eta(U))\xi+\frac{1}{2}\eta(U)[X,\xi].
  \end{eqnarray}
  We consider $V \in \chi^v(M)$. By \eqref{eq22}, \eqref{eq100}, one obtains
  \begin{eqnarray*}
g(T_U \varphi V-T_{\varphi U}V,X)&=&-g(T_UX,\varphi V)+g(\varphi(T_{\varphi U}X),\varphi V)-\eta(V)g(T_{\varphi U}\xi,X)\\
&=&2\omega(X)g(U,\varphi V)+\eta(U)g(T_\xi \varphi V,X)-\eta(V)g(T_\xi\varphi U,X).
  \end{eqnarray*}
  This relation implies \eqref{eq20} and applying \eqref{eq21}, we get
  \begin{equation*}
g((\nabla_X \varphi)U,V)=-\eta(U)g(\varphi (T_\xi X),V)-g(T_\xi X, \varphi U)\eta(V).
  \end{equation*}
Hence, we obtain \eqref{eq18}.

Finally, by \eqref{eq25}, for any $X \in \chi^b (M)$, $U \in \chi^v(M)$, we have
\begin{eqnarray*}
v((\nabla_U \varphi )X)&=&T_U \varphi X -\varphi(T_UX)\\
&=&\eta(U)(T_\xi \varphi X-\varphi (T_\xi X))+\omega(X)\varphi U+\omega(\varphi X)\varphi^2 U\\
& &+\frac{1}{2}\big((\mathfrak{L}_{\varphi X}\varphi)\varphi U -\varphi((\mathfrak{L}_X\varphi)\varphi U)\big)\\
&=& \eta(U)(T_\xi \varphi X-\varphi(T_\xi X))+\omega(X)\varphi U+\omega(\varphi X)\varphi^2 U+\frac{1}{2}N_\varphi(X, \varphi U).
\end{eqnarray*}
Since the horizontal distribution is locally spanned by basic vector fields, this formula implies \eqref{eq19}.
\end{proof}
\begin{corollary}\label{cor1}
\textit{Let $\pi: M \rightarrow M'$ be a contact-complex Riemannian submersion. Assume that $d\phi=-2\omega \wedge \phi.$ Then, one has}
\begin{equation}\label{eq26-1}
  N=2rh(B)+T_\xi \xi.
\end{equation}
\end{corollary}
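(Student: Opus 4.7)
The plan is to compute the mean curvature vector $N = \sum_a T_{e_a} e_a$ of the fibres (the sum being taken over an orthonormal basis of $\mathcal{V}$) by means of a $\varphi$-adapted vertical frame, and then to collapse the resulting sum via equation \eqref{eq20} of Proposition \ref{Prop5}.

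First, I would fix a local orthonormal frame of the vertical distribution of the form $\{u_1, \varphi u_1, \ldots, u_r, \varphi u_r, \xi\}$, of the same type as the one already used in the proof of Proposition \ref{Prop3}. With respect to this frame one has
\begin{equation*}
N \;=\; \sum_{i=1}^{r}\bigl(T_{u_i} u_i + T_{\varphi u_i}\varphi u_i\bigr) + T_\xi \xi,
\end{equation*}
so the task is reduced to evaluating, for each $i$, the contribution of the $\varphi$-pair $(u_i, \varphi u_i)$.

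For each such $i$, I would apply \eqref{eq20} with $U = u_i$ and $V = \varphi u_i$. Since $\eta(u_i) = \eta(\varphi u_i) = 0$, the first two terms on the right-hand side vanish, while $\varphi^2 u_i = -u_i$ and $\phi(u_i,\varphi u_i) = g(u_i, \varphi^2 u_i) = -1$ simplify both sides. The identity then collapses to
\begin{equation*}
T_{u_i} u_i + T_{\varphi u_i}\varphi u_i \;=\; 2\, h(B).
\end{equation*}
Summing over $i = 1, \ldots, r$ and adding $T_\xi \xi$ yields $N = 2r\, h(B) + T_\xi \xi$, which is the desired formula.

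I do not anticipate a real obstacle here: Proposition \ref{Prop5} has already done the heavy lifting, and the corollary is essentially a trace argument on the vertical distribution. The only subtlety is the choice of a $\varphi$-adapted vertical frame, which is precisely what makes the pairs $(u_i,\varphi u_i)$ fall into the hypotheses of \eqref{eq20} and forces each pair to contribute the same horizontal vector $2\, h(B)$.
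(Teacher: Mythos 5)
Your proposal is correct and follows essentially the same route as the paper: both arguments are a trace of identity \eqref{eq20} over a $\varphi$-adapted orthonormal vertical frame $\{u_1,\ldots,u_r,\varphi u_1,\ldots,\varphi u_r,\xi\}$. The only cosmetic difference is that the paper first polarizes \eqref{eq20} (substituting $V\mapsto\varphi V$) into the general identity $T_UV+T_{\varphi U}\varphi V=\eta(U)T_V\xi+\eta(V)T_U\xi-\eta(U)\eta(V)T_\xi\xi+2g(\varphi U,\varphi V)h(B)$ before tracing, whereas you specialize \eqref{eq20} directly to the pairs $(u_i,\varphi u_i)$, which yields the same per-pair contribution $2\,h(B)$.
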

\begin{proof}
  By Proposition \ref{Prop5} for any $U,V \in \chi^v(M)$, we have
  \begin{eqnarray}\label{eq26}
T_UV+T_{\varphi U}\varphi V&=&\eta(U)T_V\xi +\eta(V)T_U\xi -\eta(U)\eta(V)T_\xi \xi\nonumber\\
& &+2g(\varphi U,\varphi V)h(B).
  \end{eqnarray}
Given $p\in M$, we consider an orthonormal basis $ \{u_1,...,u_r,\varphi u_1,...,\varphi u_r,\xi\}$ of the vertical space $V_p$ and applying \eqref{eq26} we have, at $p$:
\begin{equation*}
N=\sum_{i=1}^{r}(T_{u_i}u_i+T_{\varphi u_i}\varphi u_i)+T_\xi \xi =2r h(B)+T_\xi \xi.
\end{equation*}
\end{proof}

\section{The case $\omega =0$}\label{section2}
We examine some consequences of the results stated in Section \ref{section1}, assuming that the total space is in a suitable subclass of  $C_2 \oplus  C_6 \oplus C_7 \oplus \bigoplus_{9\leq h \leq 12}C_h$.

First, we consider almost cosymplectic manifolds, whose defining condition is
\begin{equation}\label{eq50}
  d \phi =0, \, d \eta=0.
\end{equation}

These manifolds set up the class $C_2 \oplus C_9$, \cite{Chi-Gon}.

\begin{theorem}\label{theo1}
  Let $\pi : M \rightarrow M'$ be a contact-complex Riemannian submersion and assume that $M$ is an almost cosymplectic manifold.
  Then, one has:
  \begin{enumerate}
    \item The horizontal distribution is integrable and totally geodesic.
    \item Any fibre of $\pi$ is a minimal submanifold of $M$ and inherits from $M$ an almost cosymplectic structure.
        \item For any $X \in \chi^h(M)$, $U \in \chi^v(M)$ one has
        \begin{equation*}
          (\nabla_X \varphi)U=0,\quad (\nabla_U \varphi)X=\frac{1}{2}N_\varphi (X, \varphi U).
        \end{equation*}
        \item $(M',J,g')$ is an almost K$\ddot{a}$hler manifold.
  \end{enumerate}
\end{theorem}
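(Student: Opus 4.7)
The plan is to specialize Propositions \ref{Prop1}--\ref{Prop5} to $\omega = 0$, which is forced by $d\phi = 0$, combine the resulting identities with $d\eta = 0$, and deduce the four assertions in turn. Throughout, $\xi$ is vertical: $J\pi_*\xi = \pi_*\varphi\xi = 0$ forces $\pi_*\xi = 0$.

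I would first establish Part 1 via the vanishing of $A$. Proposition \ref{Prop1} with $\omega = 0$ reduces the $A$-tensor to $A_XY = \eta(A_XY)\xi$ and $A_XU = \eta(U)A_X\xi$. Closedness of $\eta$ yields $\eta([X, Y]) = 0$ for basic horizontal $X, Y$, so $\eta(A_XY) = \tfrac{1}{2}\eta(v[X, Y]) = 0$ and $A_XY = 0$; the standard O'Neill duality $g(A_XY, \xi) = -g(Y, A_X\xi)$ then forces $A_X\xi = 0$, and hence $A \equiv 0$. The integrability and totally geodesic property of $\mathcal{H}$ are immediate.

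The key step, and the main obstacle, will be to show $T_V\xi = 0$ for every vertical $V$. I plan to combine two independent identities relating $T_\xi X$ and $T_V\xi$. First, symmetry of $\nabla\xi$ (from $d\eta = 0$) gives $g(v(\nabla_X\xi), V) = g(T_V\xi, X)$ for $X$ horizontal. Specializing Proposition \ref{Prop5} to $U = \xi$ and invoking $A_X\xi = 0$, the formula $v((\nabla_X\varphi)\xi) = -\varphi(T_\xi X)$, together with the direct computation $(\nabla_X\varphi)\xi = -\varphi\, v(\nabla_X\xi)$, forces $\varphi(v(\nabla_X\xi) - T_\xi X) = 0$; since both vectors lie in $\ker\eta \cap \mathcal{V}$ and $\varphi$ is injective there, this yields $v(\nabla_X\xi) = T_\xi X$ and hence $g(T_\xi X, V) = g(T_V\xi, X)$. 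Secondly, $\xi\, g(X, V) = 0$ gives the duality $g(T_\xi X, V) = -g(X, T_\xi V)$, and the vertical-vertical $T$-symmetry $T_\xi V = T_V\xi$ turns this into $g(T_\xi X, V) = -g(T_V\xi, X)$. Comparing the two expressions, $g(T_V\xi, X) = 0$ for all horizontal $X$, so $T_V\xi = 0$, and dually $T_\xi X = 0$.

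With $A \equiv 0$, $\omega = 0$ and $T_\xi X = T_V\xi = 0$ in hand, Parts 2, 3, 4 reduce to direct substitution. For Part 2, Corollary \ref{cor1} gives $N = 2r\, h(B) + T_\xi\xi = 0$ (fibres minimal), and each fibre inherits an almost cosymplectic structure via $i^*d\phi = 0$ and $i^*d\eta = 0$. For Part 3, Proposition \ref{Prop4} annihilates the horizontal parts of $(\nabla_X\varphi)U$ and $(\nabla_U\varphi)X$, while Proposition \ref{Prop5} reduces their vertical parts to $0$ and to $\tfrac{1}{2}N_\varphi(X, \varphi U)$ respectively. Finally, for Part 4, Proposition \ref{Prop3} gives $\beta(\pi_* X) = \omega(X) = 0$, whence $d\Omega = -2\beta \wedge \Omega = 0$, so $(M', J, g')$ is almost K\"ahler.
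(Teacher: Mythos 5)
Your proposal is correct and follows essentially the same route as the paper: $A=0$ from $d\eta=0$ together with Proposition \ref{Prop1}, $T_\xi=0$ from the symmetry of $\nabla\xi$ combined with the symmetry and skew-symmetry of $T$, and parts 2--4 by specializing Corollary \ref{cor1} and Propositions \ref{Prop4}, \ref{Prop5} to $\omega=0$. The only cosmetic differences are that the paper obtains $v(\nabla_X\xi)=T_\xi X$ (its equation \eqref{eq22}) and $\nabla_\xi\xi=0$ directly rather than via \eqref{eq18}, and it deduces part 4 immediately from $d\Omega\circ\pi_*=d\phi=0$ instead of passing through Proposition \ref{Prop3} (which would formally require $\dim M'\geq 4$).
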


\begin{proof}
  Let $X,Y$ be horizontal vector fields. Then, we have
  \begin{equation*}
    g(A_XY,\xi)=\frac{1}{2}g([X,Y],\xi)=-d\eta (X,Y)=0.
  \end{equation*}
  Then, by Proposition \ref{Prop1}-2., one gets
  \begin{equation*}
    A_XY=g(A_XY,\xi)\xi=0, \, \forall X, Y \in \chi^h(M).
  \end{equation*}
Combining with the well-known properties of $A$, we obtain $A=0$, i.e. $\mathcal{H}$ is integrable. Moreover \[\nabla_X Y= h(\nabla_X Y)\in \chi^h(M),\]
which proves 1.

Let $((F,\,\, \hat{\varphi}=\varphi \mid_{TF},\,\, \hat{\xi}=\xi \mid_F, \,\,\hat{\eta} =\eta \mid_{TF},\,\, \hat{g}=g\mid_{TF\times TF})$ be a leaf of $\pi$. Then, \[d\hat{\phi}=d\phi \mid_{TF}=0,\,\,\, d \hat{\eta}=d\eta \mid_{TF}=0.\]
Moreover, we recall that, being $M$ a $C_2\oplus C_9$-manifold, one has $\nabla_\xi \xi=0$, so $T_\xi \xi= h(\nabla_\xi \xi)=0$ and by Corollary \ref{cor1}, the mean curvature vector field of $F $ vanishes, so we obtain 2.

Now, we prove that the operator $T_\xi$ vanishes. Since $d\eta=0$, for any $X \in  \chi^h(M)$, $U \in \chi^v(M)$, we have
\begin{eqnarray*}
g(T_U\xi,X)&=&g(\nabla_U\xi,X)=g(\nabla_X \xi,U)=g(T_\xi X,U)=-g(T_U\xi, X)\\
&\Rightarrow&g(T_U\xi,X)=g(T_\xi U,X)=0,\,\forall X \in \chi^h(M)\\
&\Rightarrow& T_\xi U=0, \, \forall U \in \chi^v(M)\Rightarrow T_\xi =0.
\end{eqnarray*}
Hence, by Proposition \ref{Prop4} and Proposition \ref{Prop5}, we obtain 3.

Property 4. is trivial since
\[d\phi=0\Rightarrow d \Omega=0,\]
so $(M',J,g')$ is an almost K$\ddot{a}$hler manifold.
\end{proof}

\begin{example}\label{3.1}
Let $(M', J, g')$ be an almost Hermitian manifold, with $\dim M'=2n$, $(F, \hat{\varphi}, \hat{\xi}, \hat{\eta}, \hat{g})$ an almost contact metric manifold, $\dim F=2r+1$. On the product manifold $M=M'\times F$ one considers the almost contact metric structure $(\varphi, \xi, \eta, g)$ such that, for any $X, Y \in \chi(M')$, $U,V \in \chi(F)$
\begin{equation}\label{eq51}
  \varphi(X,U)=(JX,\hat{\varphi}U),\quad \eta(X,U)=\frac{r}{n+r}\hat{\eta}(U),\quad \xi = \frac{n+r}{r}\hat{\xi}
\end{equation}
\begin{equation}\label{eq52}
  g((X,U),(Y,V))=g'(X,Y)+(\frac{r}{n+r})^2\hat{g}(U,V).
\end{equation}
The canonical projection $\pi: M\rightarrow M'$ is a contact-complex Riemannian submersion and, for any $x \in M'$, the fibre $\pi^{-1}(x)=\{x\}\times F$ is identified with $F$. So, the vertical and horizontal distributions are identified with $TF$ and $TM'$, respectively.

By a direct calculation, one has
\begin{eqnarray*}
d\phi(X,Y,Z)&=&d \Omega (X,Y,Z), \,\,d\eta(X,Y)=0,\,\, X,Y,Z \in \chi(M')\\
d\phi(X,Y,U)&=& d\phi(X,U,V)=0,\,\,d\eta(X,U)=0, \,\,X,Y \in \chi(M'),U,V\in \chi(F)\\
d\phi(U,V,W)&=&\bigg(\frac{r}{n+r}\bigg)^2d\hat{\phi}(U,V,W), \,\, U,V,W \in \chi(F)\\
d\eta(U,V)&=&\bigg(\frac{r}{n+r}\bigg)d\hat{\eta}(U,V).
\end{eqnarray*}
It follows that $(M,\varphi, \xi, \eta, g)$ is almost cosymplectic if and only if $(M',J,g')$ is almost K$\ddot{a}$hler and $(F, \hat{\varphi},\hat{\xi}, \hat{\eta}, \hat{g})$ is almost cosymplectic.

Hence, considering $(M',J,g')$ in the Gray-Hervella class $W_2$ and $(F, \hat{\varphi}, \hat{\xi}, \hat{\eta}, \hat{g})$ in $C_2 \oplus C_9$, the projection $\pi : M \rightarrow M'$ is a contact-complex Riemannian submersion such that both the vertical and horizontal distributions are totally geodesic (i.e. $T=0,$ $A=0$).

We also remark that the $C_9$-component of $\nabla \phi$ is determined by the tensor field $\nabla \xi $ and for any $X \in \chi(M)$, $U \in \chi(F)$ one has
\begin{equation*}
  \nabla_X \xi=0, \quad h(\nabla_U \xi)=0, \quad v(\nabla_U \xi)=\bigg(\frac{n+r}{r}\bigg) \hat{\nabla}_U \hat{\xi}.
\end{equation*}
It follows that $M$ is a $C_2$-manifold if and only if $F$ is a $C_2$-manifold. Moreover, as stated in \cite{Tshi}, $M$ is a $C_9$-manifold if and only if $M'$ is K$\ddot{a}$hler and $F$ falls in $C_9.$
\end{example}

Now we consider the class $C_6\oplus C_7$, which consists of the quasi-Sasakian manifolds. The defining condition is
\begin{equation}\label{eq53}
d\phi=0,\quad N_\varphi+2d \eta \otimes \xi=0.
\end{equation}
It is known that any ACM-manifold $(M,\varphi, \xi, \eta, g)$ such that $\dim M \geq 5$ is quasi-Sasakian if and only if
\begin{equation}\label{eq54}
(\nabla_X \phi)(Y,Z)=\eta(Y)(\nabla_{\varphi X}\eta)Z+\eta(Z)(\nabla_Y\eta)\varphi X,
\end{equation}
for any $X,Y,Z \in \chi(M)$, \cite{Fa2}.

\begin{theorem}\label{theo2}
  Let $\pi: M\rightarrow M'$ be a contact-complex Riemannian submersion. If $M$ is quasi-Sasakian, then the following properties hold:
  \begin{enumerate}
    \item $(M',J,g')$ is a K$\ddot{a}$hler manifold.
    \item If $r \geq 1$, any fibre is a minimal submanifold of $M$ and inherits from $M$ a quasi-Sasakian structure.
        \item The invariant $A$ satisfies
        \[A_{\varphi X}U=\varphi(A_XU)=\eta(U)A_{\varphi X}\xi,\,X \in \chi^h(M),\, U \in \chi^v(M).\]
        \item The invariant $T$ satisfies
        \[T_\xi \circ \varphi =\varphi \circ T_\xi\]
        \[T_U \varphi V-\varphi(T_UV)=-\eta(V)T_\xi(\varphi U)\]
        \item For any $X \in \chi^h(M),\, U \in \chi^v(M)$, one has
        \[(\nabla_U \varphi)X=g(T_\xi \varphi U,X)\xi,\]
        \[(\nabla_X \varphi)U=-\eta(U)\bigg(\varphi(A_X \xi+T_\xi X)-g(\varphi(T_\xi X),U)\xi\bigg).\]
  \end{enumerate}
\end{theorem}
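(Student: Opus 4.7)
The plan is to exploit that the quasi-Sasakian hypothesis gives $d\phi = 0$ (hence $\omega \equiv 0$ and $B \equiv 0$) together with the Nijenhuis identity $N_\varphi + 2 d\eta \otimes \xi = 0$, equivalently \eqref{eq54}. All identities from Section~\ref{section1} then specialise with $\omega = 0$, and I inject \eqref{eq54} item by item.

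For item 1, Proposition~\ref{Prop3} immediately yields $\beta = 0$, so $(M',J,g')$ is almost K\"ahler. To promote to K\"ahler I apply \eqref{eq54} with $X,Y,Z$ all horizontal: since $\eta$ vanishes on $\mathcal{H}$, the right-hand side vanishes and $(\nabla_X\phi)(Y,Z) = 0$. Choosing $X,Y,Z$ basic and invoking $\pi_*(h(\nabla_X Y)) = \nabla'_{X'}Y'$ descends this to $(\nabla'_{X'}J)Z' = 0$ on $M'$. For item 2, Proposition~\ref{Prop2} with $\omega = 0$ gives $d\hat{\phi} = 0$, and the Nijenhuis condition on the leaf follows by restricting $N_\varphi + 2d\eta\otimes\xi = 0$ to vertical arguments, which is legitimate because $\mathcal{V}$ is integrable. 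Minimality reduces, via Corollary~\ref{cor1} (which now reads $N = T_\xi\xi$), to $\nabla_\xi\xi = 0$; this follows from a short use of \eqref{eq54} exploiting $\varphi\xi = 0$ and $g(\nabla_\xi\xi,\xi) = 0$.

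For items 3--5 I substitute $\omega = 0$ into Propositions~\ref{Prop1}, \ref{Prop4}, \ref{Prop5}. The first two equalities in item 3 are immediate from $A_X U = \eta(U)A_X\xi$. The remaining identity $A_{\varphi X}\xi = \varphi(A_X\xi)$ is obtained by specialising \eqref{eq54} to $Y = \xi$ with $X,Z$ horizontal, producing the symmetry $g(\varphi Y, A_X\xi) = g(\varphi X, A_Y\xi)$; combined with the Killing property of $\xi$ (a standard feature of quasi-Sasakian, equivalent to $g(A_X\xi, Z) + g(X, A_Z\xi) = 0$ on horizontal arguments) this forces the desired equality. For item 4, the second equation follows by applying \eqref{eq54} with $X$ vertical, $Y$ horizontal, $Z$ vertical, computing the horizontal component of $(\nabla_U\varphi)V$ via Proposition~\ref{Prop5} and rewriting through $\xi$ being Killing; setting $V = \xi$ then yields $T_\xi\circ\varphi = \varphi\circ T_\xi$ on vertical arguments, and an analogous calculation with $X = \xi$ handles the horizontal case. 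Item 5 then reduces to substitution: the horizontal part of $(\nabla_U\varphi)X$ vanishes by item 3, the vertical part of $(\nabla_X\varphi)U$ comes from Proposition~\ref{Prop5} after invoking item 4, and the quasi-Sasakian identity $N_\varphi(X,\varphi U) = -2d\eta(X,\varphi U)\xi$ collapses the Nijenhuis contribution to a multiple of $\xi$.

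The main obstacle is the verification of the two commutation relations $A_{\varphi X}\xi = \varphi(A_X\xi)$ and $T_\xi\circ\varphi = \varphi\circ T_\xi$: these are precisely the places where the full strength of \eqref{eq54} (beyond $\omega = 0$) and the Killing character of $\xi$ enter in an essential way. The remainder is routine bookkeeping of horizontal and vertical components of $\nabla\xi$, $\nabla\varphi$, $A$ and $T$.
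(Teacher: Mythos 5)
Your proposal is correct and follows essentially the same route as the paper: both specialize the quasi-Sasakian identity \eqref{eq54} to the various horizontal/vertical slots and feed the results into the $\omega=0$ forms of Propositions \ref{Prop1}--\ref{Prop5} and Corollary \ref{cor1} (normality and $d\hat{\phi}=0$ of the fibres, $T_\xi\xi=h(\nabla_\xi\xi)=0$ for minimality, the Nijenhuis collapse $N_\varphi(X,\varphi U)=-2d\eta(X,\varphi U)\xi$ for item 5). The only cosmetic differences are that the paper proves $M'$ is K\"{a}hler in one step from \eqref{eq54} without first passing through $\beta=0$, and obtains $A_{\varphi X}\xi=\varphi(A_X\xi)$ by putting $U=\xi$ directly in $A_X\varphi U-\varphi(A_XU)=-\eta(U)A_{\varphi X}\xi$ rather than via your symmetry-plus-Killing argument.
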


\begin{proof}
  Considering $X',Y',Z' \in \chi(M')$, let $X,Y,Z$ be their horizontal lifts. By \eqref{eq54}, we have
  \begin{equation*}
    (\nabla'_{X'}\Omega)(Y',Z')\circ \pi=(\nabla_X \phi)(Y,Z)=0.
  \end{equation*}
  Then, $(M',J,g')$ is a K$\ddot{a}$hler manifold.

  Let $(F, \hat{\varphi},\hat{\xi}, \hat{\eta}, \hat{g})$ be a fibre $(\dim F =2r+1 \geq 3).$ It is obvious to prove that $F$ is normal and has closed fundamental form. By Corollary \ref{cor1}, since $T_\xi \xi=h(\nabla_\xi \xi)=0$, we obtain that $(F,\hat{g})$ is a minimal submanifold of $M$.

  Given $X \in \chi^h(M)$, $U \in \chi^v(M)$, for any $Y \in \chi^h(M)$, we have
  \begin{equation*}
    g(A_X\varphi U-\varphi(A_XU),Y)=(\nabla_X \phi)(Y,U)=\eta(U)(\nabla_Y \eta)\varphi X=-\eta(U)g(A_{\varphi X}\xi,Y)
  \end{equation*}
It follows
\begin{equation}\label{eq55}
  A_X\varphi U-\varphi(A_XU)=-\eta(U)A_{\varphi X}\xi.
\end{equation}
In particular, one has
\begin{equation}\label{eq56}
  \varphi(A_X\xi)=A_{\varphi X} \xi.
\end{equation}
So, by Proposition \ref{Prop1}, one gets
\begin{equation*}
  A_{\varphi X}U=\varphi(A_XU)=\eta(U)A_{\varphi X}\xi.
\end{equation*}
Applying again \eqref{eq54} for any $V \in \chi^v(M)$, we have
\begin{equation}\label{eq57}
  g((\nabla_X\varphi)U,V)=-\eta(U)g(\nabla_{\varphi X}\xi,V)-\eta(V)g(T_U\xi,\varphi X).
\end{equation}
Putting in \eqref{eq57} $V=\xi$, we have
\begin{equation*}
  g(\nabla_X \xi,\varphi U)=g(T_\xi U,\varphi X).
\end{equation*}
This relation also implies
\begin{equation*}
  g(\nabla_{\varphi X}\xi,U)=g(T_\xi \varphi U,X)
\end{equation*}
and by \eqref{eq57},  we get
\begin{equation*}
  g((\nabla_X \varphi)U,V)=-\eta(U)g(T_\xi \varphi V,X)+\eta(V)g(\varphi(T_\xi U),X).
\end{equation*}
It follows
\begin{equation}\label{eq58}
  v((\nabla_X \varphi)U)=-\eta(U)\varphi(T_\xi X)+g(\varphi(T_\xi U),X)\xi.
\end{equation}
Applying Proposition \ref{Prop5} - \eqref{eq18} one has
\begin{equation*}
  \varphi(T_\xi U)=T_\xi \varphi U, \, U \in \chi^v(M).
\end{equation*}
Then, using well-known properties of $T$, one gets
\[\varphi \circ T_\xi=T_\xi \circ \varphi .\]

Furthermore, for any $X \in \chi^h(M),$ $U,V \in \chi^v(M)$, applying \eqref{eq54}, we have
\begin{equation*}
  g(T_U \varphi V-\varphi(T_UV),X)=g((\nabla_U \varphi)V,X)=-\eta(V)g(\nabla_{\varphi U}\xi, X)=-\eta(V)g(T_\xi \varphi U, X).
\end{equation*}
It follows $T_U\varphi V-\varphi(T_UV)=-\eta(V)T_\xi\varphi U$ and 4. is proved.

To prove 5., by Proposition \ref{Prop5} and the normality of $M$, one has
\begin{eqnarray*}
(\nabla_U \varphi)X&=&v((\nabla_U \varphi)X)=\frac{1}{2}N_\varphi (X, \varphi U)=-d \eta(X, \varphi U)\xi \\
&=& -\frac{1}{2}\big(g(T_\xi U, \varphi X)-g(T_\xi \varphi U,X)\big)\xi=g(T_\xi \varphi U, X)\xi.
\end{eqnarray*}
The last formula follows by Proposition \ref{Prop4} and \eqref{eq58}.
\end{proof}

\begin{remark}
Properties 1., 2. and 3. in Theorem \ref{theo2} were firstly obtained by Tshikuna-Matamba (Proposition 2.6, 2.7, 2.8, \cite{Tshi1}).
\end{remark}
\begin{example}\label{3.2}
Let $(M',J',g') $ be a K\"{a}hler manifold, $\dim M'=2n$, $(F, \hat{\varphi}, \hat{\xi}, \hat{\eta}, \hat{g})$ a quasi-Sasakian manifold, $\dim F=2r+1.$ It is easy to prove that the ACM-structure on the product manifold $M' \times F$ defined in Example \ref{3.1} is quasi-Sasakian. Therefore, the canonical projection $\pi:M\rightarrow M'$ is a contact-complex Riemannian submersion whose total space is a quasi-Sasakian manifold.

We also note that the $C_6$-components of $\nabla \phi$, $\hat{\nabla}\hat{\phi}$ are determined by $\delta \phi (\xi)$, $\hat{\delta}\hat{\phi}(\hat{\xi})$, respectively. Being $A=0$, one checks the equality \[\delta \phi (\xi)=\frac{n+r}{r}\hat{\delta}\hat{\phi}(\hat{\xi}).\]
It follows that, if $F$ is a $C_7$-manifold, then also $M $ falls in the class $C_7$, so $\pi: M \rightarrow M'$ provides an example of contact-complex Riemannian submersion whose domain is a $C_7$-manifold.
\end{example}
Now, we state the main properties of those submersions whose total space is a $C_6$-manifold.

The defining condition of class $C_6$ is expressed as:
\begin{equation}\label{new20}
 (\nabla_X \varphi)Y=\alpha(g(\varphi X,Y)\xi-\eta(Y)X),
\end{equation}
where $\alpha = \frac{\delta\phi(\xi)}{2m}$ is a smooth function. If $\alpha \neq 0$ is constant, then the manifold is called $\alpha$-Sasakian, Sasakian if $\alpha = 1.$ In \cite{Mar} the author proves that, if $m\geq 2,$ any connected $C_6$-manifold $M^{2m+1}$ either is $\alpha$-Sasakian or it is cosymplectic.

The main example of contact-complex submersion from a Sasakian manifold is the Hopf fibration $\pi: S^{2m+1}\rightarrow P_m(\mathbb{C}),$ \cite{Wat}, \cite{Fal}.
\begin{theorem}\label{new21}
Let $\pi: M \rightarrow M'$ be a contact-complex Riemannian submersion and assume that $M$ is a $\alpha$-Sasakian. Then, the following properties hold
\begin{itemize}
  \item[1. ] $M'$ is a K\"{a}hler manifold.
  \item[2. ] If $r \geq 1$, any fibre of $\pi$ is a minimal submanifold of $M$ and inherits from $M$ an $\alpha$-Sasakian structure.
  \item[3. ] The invariant $A$ acts as \[A_XY=-\alpha \phi(X,Y)\xi,\,\, X,Y \in \chi^h(M).\]
  \item[4. ] The invariant $T$ satisfies \[T_U \circ \varphi = \varphi \circ T_U, \,\,U \in \chi^v(M).\]
\end{itemize}
\end{theorem}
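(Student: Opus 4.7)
The plan is to reduce to the quasi-Sasakian setting already treated in Theorem \ref{theo2}. Indeed, the $C_6$ defining relation \eqref{new20} yields $d\eta = \alpha \phi$, whence $d\phi = 0$ and the Lee form $\omega$ vanishes, so $M$ is (in particular) quasi-Sasakian and all conclusions of Theorem \ref{theo2} apply. Property 1 ($M'$ a K\"{a}hler manifold) is then immediate. For property 3, I would first obtain $\nabla_X \xi = -\alpha \varphi X$ by substituting $Y = \xi$ in \eqref{new20} to get $\varphi(\nabla_X \xi) = \alpha X$, and then applying $\varphi$ and using $\eta(\nabla_X \xi) = 0$. For basic horizontal $X, Y$ this yields $g(A_X Y, \xi) = -g(Y, \nabla_X \xi) = \alpha g(Y, \varphi X) = -\alpha \phi(X, Y)$. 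Since $\omega = 0$, Proposition \ref{Prop1} reduces to $A_X Y = \eta(A_X Y)\xi$, whence $A_X Y = -\alpha \phi(X, Y) \xi$.

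The heart of the argument is property 4. I would specialize the identity of Theorem \ref{theo2} part 4, namely $T_U \varphi V - \varphi(T_U V) = -\eta(V) T_\xi \varphi U$, by setting $V = \xi$. The left-hand side vanishes because $\varphi \xi = 0$ and $T_U \xi = h(\nabla_U \xi) = h(-\alpha \varphi U) = 0$ (as $\varphi U$ is vertical whenever $U$ is vertical), forcing $T_\xi \varphi U = 0$ for every vertical $U$. Replacing $U$ by $\varphi U$ and using $\varphi^2 U = -U + \eta(U) \xi$, together with $T_\xi \xi = h(\nabla_\xi \xi) = h(-\alpha \varphi \xi) = 0$, one concludes that $T_\xi$ vanishes on the vertical distribution. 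Re-inserting $T_\xi \varphi U = 0$ into the identity from Theorem \ref{theo2} then gives $T_U \circ \varphi = \varphi \circ T_U$.

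For property 2, the fibres are already quasi-Sasakian and minimal by Theorem \ref{theo2} part 2. To upgrade the induced structure to $\alpha$-Sasakian, I would apply the Gauss formula $\nabla_U V = \hat\nabla_U V + T_U V$ for vertical $U, V$ to write $(\nabla_U \varphi) V = (\hat\nabla_U \hat\varphi) V + T_U \varphi V - \varphi(T_U V)$; by property 4 the last two terms cancel, so \eqref{new20} restricts verbatim to the $\alpha$-Sasakian defining identity on the fibre, with the same constant $\alpha$. The principal technical hurdle is the vanishing $T_\xi = 0$ on vertical fields, which is precisely where the contact-complex hypothesis enters through Theorem \ref{theo2} part 4; once this is in hand, the remaining verifications are direct substitutions.
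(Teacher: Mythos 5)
Your proposal is correct, and for properties 1, 2 (minimality) and 3 it follows the paper's route: invoke Theorem \ref{theo2}, extract $\nabla_X\xi=-\alpha\varphi X$ from \eqref{new20}, and use Proposition \ref{Prop1} with $\omega=0$. Where you genuinely diverge is property 4, which you treat as the ``heart'' of the argument: you specialize the quasi-Sasakian identity $T_U\varphi V-\varphi(T_UV)=-\eta(V)T_\xi\varphi U$ at $V=\xi$, deduce $T_\xi\varphi U=0$ and hence $T_\xi|_{\mathcal V}=0$, and feed this back in. The paper is more direct: for vertical $U,V$ the right-hand side of \eqref{new20} is itself vertical, so $T_U\varphi V-\varphi(T_UV)=h((\nabla_U\varphi)V)=0$ in one line, and the same decomposition of $(\nabla_U\varphi)V$ into $v$- and $h$-parts simultaneously delivers $(\hat\nabla_U\hat\varphi)V=\alpha(\hat g(\hat\varphi U,V)\hat\xi-\hat\eta(V)U)$, i.e.\ the $\alpha$-Sasakian structure of the fibre, which you obtain separately via the Gauss formula (an equivalent computation). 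Your detour is valid but buys nothing here; what it does produce as a by-product is the explicit vanishing of $T_\xi$ on the vertical distribution, which the paper does not record. One small point to tighten: both your argument and the direct one only give $(T_U\circ\varphi)|_{\mathcal V}=(\varphi\circ T_U)|_{\mathcal V}$, since the identity you quote holds for vertical $V$; the extension to horizontal arguments, needed for the full operator identity in 4., uses the skew-symmetry of each $T_U$ (as the paper notes explicitly), and you should say so.
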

\begin{proof}
  Property 1. and the minimality of the fibres follow by Theorem \ref{theo2}. Applying \eqref{new20}, for any $U,V \in \chi^v(M)$ one has
  \begin{eqnarray*}
(\hat{\nabla}_U\varphi)V= v(({\nabla}_U\varphi)V)&=&\alpha (g(U,V)\xi-\eta(V)U),\\
T_U\varphi V - \varphi (T_UV)&=&h((\nabla_U\varphi )V)=0.
  \end{eqnarray*}
  It follows that the structure induced on any fibre is $\alpha$-Sasakian and the invariant $T$ satisfies
  \[(T_U \circ \varphi)_{/ \mathcal{V}}= (\varphi \circ T_U)_{/\mathcal{V}}, \,\, U \in \chi^v(M). \]
  Being each operator $T_U$ skew-symmetric, we obtain 4.

By \eqref{new20}, for any $X \in \chi(M)$ one has $\nabla_X \xi= -\alpha \varphi X$. By Proposition \ref{Prop1}, being $B=0$, for any $X,Y \in \chi^h(M)$ we obtain
\[A_XY=-g(\nabla_X \xi,Y)= \alpha g(\varphi X, Y)\xi.\]
\end{proof}
We end this section considering submersions whose total space falls in the class $C_{12}$. First, we recall the defining condition of any $C_{12}$-manifold, namely
\begin{equation}\label{eq59}
 (\nabla_X \varphi)Y=-\eta(X)\big((\nabla_\xi \eta)\varphi Y\xi+\eta(Y)\varphi(\nabla_\xi \xi)\big).
\end{equation}
The class $C_{12}$ is strictly related to the one of K$\ddot{a}$hler manifolds. More precisely, given a K$\ddot{a}$hler manifold $(M',J,g')$ and a smooth positive function $f:I \times M' \rightarrow \mathbb{R},$ $I$ being an open interval, on the product manifold $I \times M'$ one considers the ACM-structure $(\varphi , \xi , \eta , g_f)$ acting as:
\begin{equation*}
  \varphi(a \frac{\partial}{\partial t}, X)=(0,JX),\quad \eta( a \frac{\partial}{\partial t},X)=af,
\end{equation*}
\begin{equation*}
  \xi =\frac{1}{f}(\frac{\partial}{\partial t},0), \quad g_f=f^2\pi^*_1(dt \otimes dt)+\pi^*_2g',
\end{equation*}
for any $a \in \mathfrak{F}(I \times M'),$ $X \in \chi(M')$, $\pi_1: I \times M' \rightarrow I $,  $\pi_2 : I \times M' \rightarrow M'$ denoting the canonical projections.

The manifold $_fI \times M'=(I \times M', \varphi, \xi, \eta, g_f)$ falls in the class $C_{12}$, the projection $\pi_2 : _f\!I \times M' \rightarrow (M',J,g')$ is a contact-complex Riemannian submersion with 1-dimensional fibres. The action of the invariant $T$ is determined by
\begin{equation*}
  T_\xi \xi= h(\nabla_\xi \xi)=grad(\log f)-\xi (\log f)\xi,
\end{equation*}
the gradient being evaluated with respect to $g_f.$

It follows that $\pi_2$ has totally geodesic fibres if and only if the function $f=f(t,p)$ only depends on $t.$

Conversely, applying the theory developed in \cite{PR}, \cite{Fa3}, one proves that any $C_{12}$-manifold is, locally, realized as the ACM-manifold $_f]-\epsilon, \epsilon [ \times F$, $\epsilon >0$, $F$ being a K$\ddot{a}$hler manifold and $f: \,]-\epsilon, \epsilon [ \times F \rightarrow \mathbb{R}$ a smooth positive function. It follows that a contact-complex Riemannian submersion from a $C_{12}$-manifold with 1-dimensional fibres acts locally as the canonical projection onto a K$\ddot{a}$hler manifold.

Explicit examples of submersions from a $C_{12}$-manifold onto a K$\ddot{a}$hler one with fibres of dimension $2r+1 \geq 3$ are known (\cite{Tshi}).

Now,  we state the main properties of these submersions.
\begin{theorem}\label{thm3}
 Let $\pi: M\rightarrow M'$ be a contact-complex Riemannian submersion. Assume that $M$ is a $C_{12}$-manifold. The following properties hold:
 \begin{enumerate}
   \item $M'$ is a K$\ddot{a}$hler manifold.
   \item The horizontal distribution is totally geodesic.
   \item Any fibre $F$ inherits from $M$ a $C_{12}$-structure, provided that $\dim F \geq 3$.
   \item The invariant $T$ satisfies
   \begin{equation}\label{eq60}
     T_U \varphi V= \varphi(T_UV)-\eta(U)\eta(V)\varphi (T_\xi \xi),
   \end{equation}
   \begin{equation}\label{eq61}
     T_UV+T_{\varphi U} \varphi V=\eta(U)\eta(V)T_\xi \xi, \, U,V \in \chi^v (M).
   \end{equation}
   \item For any $ U \in \chi^v(M) $ , $X \in \chi^h(M)$ one has
   \begin{equation}\label{eq62}
     (\nabla_U \varphi )X=-\eta(U)g(T_\xi \xi,  \varphi X) \xi,
   \end{equation}
   \begin{equation}\label{eq63}
     (\nabla_X \varphi )U=0.
   \end{equation}
 \end{enumerate}
\end{theorem}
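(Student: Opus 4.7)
The plan is to exploit the fact that $C_{12}$-manifolds have vanishing Lee form. Indeed, for a pure $C_{12}$-manifold $\tau_4 = \tau_5 = 0$, so by \eqref{eq4} the Lee form $\omega$ vanishes identically (hence $B=0$), and $d\phi = -2\omega\wedge\phi = 0$; consequently every result of Section~\ref{section1} is available with $\omega = 0$. A second preparatory observation is extracted directly from \eqref{eq59}: setting $Y = \xi$ there and comparing with $(\nabla_X\varphi)\xi = -\varphi(\nabla_X\xi)$ gives $\varphi(\nabla_X\xi - \eta(X)\nabla_\xi\xi) = 0$, and since both vectors are $g$-orthogonal to $\xi$ this forces
\[
\nabla_X\xi = \eta(X)\,\nabla_\xi\xi \qquad \text{for every } X \in \chi(M).
\]
In particular $\nabla_X\xi = 0$ whenever $X$ is horizontal, and $T_U\xi = \eta(U)\,T_\xi\xi$ whenever $U$ is vertical.

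From this (1) follows at once: for basic horizontal lifts $X, Y$ the fact that $\eta(X)=0$ in \eqref{eq59} gives $(\nabla_X\varphi)Y = 0$, which via $\pi_*\varphi = J\pi_*$ translates to $\nabla'\Omega = 0$, so $M'$ is K\"ahler. For (2), Proposition~\ref{Prop1} with $\omega = 0$ reduces to $A_XY = \eta(A_XY)\xi$ and $A_XU = \eta(U)A_X\xi$; the preparatory step gives $A_X\xi = v(\nabla_X\xi) = 0$, while for basic horizontal $X, Y$ one has $g(A_XY,\xi) = \tfrac12 g([X,Y],\xi) = 0$ because $g(\nabla_XY,\xi) = -g(Y,\nabla_X\xi) = 0$. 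Hence $A \equiv 0$ and $\nabla_XY \in \mathcal H$ for all horizontal $X, Y$, so $\mathcal H$ is integrable and totally geodesic.

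For (3), (4) and (5) the strategy is simply to read off \eqref{eq59} on the appropriate pairs and split both sides into horizontal and vertical components. Identity \eqref{eq63} is immediate from $\eta(X)=0$. For \eqref{eq62}, applying \eqref{eq59} with $U$ vertical and $X$ horizontal leaves only $(\nabla_\xi\eta)(\varphi X) = g(\nabla_\xi\xi,\varphi X) = g(T_\xi\xi,\varphi X)$, since $\varphi X$ is horizontal while the remaining part $v(\nabla_\xi\xi)$ is vertical. For (3) and \eqref{eq60} I take both $U, V$ vertical in \eqref{eq59}; since $\pi_*\varphi V = J\pi_*V = 0$, $\varphi V$ is vertical, and the splittings
\[
\nabla_U\varphi V = \hat\nabla_U\hat\varphi V + T_U\varphi V, \qquad \varphi(\nabla_UV) = \hat\varphi(\hat\nabla_UV) + \varphi(T_UV)
\]
separate the two sides cleanly. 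The vertical part, after noting $g(T_\xi\xi,\varphi V)=0$ and $v(\nabla_\xi\xi) = \hat\nabla_{\hat\xi}\hat\xi$, is exactly the $C_{12}$-defining identity on $F$, proving (3); the horizontal part yields \eqref{eq60}. Finally \eqref{eq61} is obtained by substituting $B = 0$ and $T_U\xi = \eta(U)T_\xi\xi$ into the identity \eqref{eq26} established in the proof of Corollary~\ref{cor1}.

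The only delicate point is the bookkeeping in (3) and (4): one must verify that $\varphi$ preserves both distributions (so that $\hat\varphi$ is well defined and the splitting above is clean), and carefully distinguish the horizontal part $T_\xi\xi$ from the vertical part $\hat\nabla_{\hat\xi}\hat\xi$ of $\nabla_\xi\xi$ when projecting the right-hand side of \eqref{eq59}. Once this is in place, all five assertions follow by direct computation.
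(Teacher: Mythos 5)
Your proof is correct and follows essentially the same route as the paper: everything is read off from the defining identity \eqref{eq59} together with its consequence $\nabla_X\xi=\eta(X)\nabla_\xi\xi$, Proposition \ref{Prop1} with $\omega=0$, and the horizontal/vertical splitting of $\nabla\varphi$. The only differences are cosmetic improvements — you prove properties 1 and 3 directly from \eqref{eq59} where the paper cites Tshikuna-Matamba, obtain \eqref{eq63} immediately from $\eta(X)=0$ rather than via Propositions \ref{Prop4} and \ref{Prop5}, and derive \eqref{eq61} from \eqref{eq26} instead of by direct manipulation — all of which are sound.
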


\begin{proof}
Properties 1. and 3. are known (\cite{Tshi}).

To state 2., remark that, being $M$ a $C_{12}$-manifold, one has
\begin{equation*}
  \nabla_X \xi = \eta(X)\nabla_\xi \xi , \, X \in \chi(M).
\end{equation*}
So, considering $U \in \chi^v(M)$, $X \in \chi^h(M)$, by Proposition \ref{Prop1}, one has
\begin{equation*}
  A_X U=\eta(U) A_X \xi =\eta(U) h(\nabla_X \xi)=0.
\end{equation*}
It follows that all the operators $A_X, \, X \in \chi^h(M)$, vanish and then $A=0.$ This implies that the horizontal distribution is totally geodesic.

Let $U,V$ be vertical vector fields. Applying \eqref{eq59}, we have
\begin{equation*}
  T_U \varphi V - \varphi (T_UV)= h((\nabla_U \varphi) V)=-\eta(U) \eta(V)\varphi (T_\xi \xi).
\end{equation*}
Therefore, we also obtain
\begin{equation*}
  T_U \varphi V =T_{\varphi U} V,\quad T_U \xi = \eta(U)T_\xi \xi ,
\end{equation*}
\begin{equation*}
  T_U V + T_{\varphi U}\varphi V=-T_U \varphi^2V+\eta(V)T_U \xi+T_{\varphi U}\varphi V=\eta(U)\eta(V) T_\xi \xi.
\end{equation*}
Moreover, applying well-known properties of $T$, for any $X \in \chi^h(M)$ we have $T_\xi X= \eta(T_\xi X)\xi.$ By Proposition \ref{Prop4}, \ref{Prop5} one gets
\eqref{eq63}.

Formula \eqref{eq62} follows by \eqref{eq59}.
\end{proof}

\begin{corollary}\label{newcor}
\textit{Let $\pi: M\rightarrow M'$ be a contact-complex Riemannian submersion. Assume that $M$ is a $C_{12}$-manifold and any fibre $(F^{2r+1},\hat{g})$ is a totally umbilical submanifold of $M$, $r \geq 1$. Then the invariant $T$ vanishes and $\pi$ is a totally geodesic map.}
\end{corollary}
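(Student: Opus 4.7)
The plan is to combine two different expressions for the mean curvature vector $N$ of a fibre. First, since $M$ is a $C_{12}$-manifold, only the $\tau_{12}$ component of $\nabla\phi$ is nonzero, so by \eqref{eq4} the Lee form satisfies $\omega = 0$ and hence $B = 0$. Corollary \ref{cor1} therefore reduces to $N = T_\xi\xi$.

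Second, the totally umbilical hypothesis gives $T_UV = g(U,V)\,H$ for some horizontal vector field $H$ and every $U,V \in \chi^v(M)$. Tracing this identity over an orthonormal vertical frame of size $2r+1$ produces $N = (2r+1)\,H$, while specializing to $U = V = \xi$ yields $T_\xi\xi = H$. The two expressions for $N$ thus give $(2r+1)\,T_\xi\xi = T_\xi\xi$, so $2r\,T_\xi\xi = 0$. Since $r \geq 1$, this forces $T_\xi\xi = 0$, hence $H = 0$, and therefore $T_UV = 0$ for every pair of vertical vector fields.

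To obtain $T_U X = 0$ for $X$ horizontal and $U$ vertical, I would use the duality $g(T_U X, V) = -g(X, T_U V)$ for $V$ vertical, an immediate consequence of $U\,g(X,V) = g(\nabla_U X, V) + g(X, \nabla_U V)$ together with $g(X,V) = 0$. Since $T_UV = 0$, this yields $T_U X = 0$, so $T \equiv 0$. Combined with $A = 0$, already guaranteed by the totally geodesic horizontal distribution from Theorem \ref{thm3}, we conclude that $\pi$ is a totally geodesic map. The main step is the self-consistency argument for $T_\xi\xi$: the hypothesis $r \geq 1$ is essential, since it is exactly what turns $2r\,T_\xi\xi = 0$ into $T_\xi\xi = 0$. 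The only other delicate point is verifying that $C_{12}$ manifolds have $\omega = 0$, which follows at once from \eqref{eq4}, so that Corollary \ref{cor1} simplifies as claimed.
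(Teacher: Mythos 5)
Your proof is correct, but the decisive step is carried out differently from the paper. Both arguments begin the same way: $\omega=0$ for a $C_{12}$-manifold (so $h(B)=0$ and Corollary \ref{cor1} gives $N=T_\xi\xi$), and the umbilical hypothesis is written as $T_UV=\frac{1}{2r+1}g(U,V)\,T_\xi\xi$. To kill $T_\xi\xi$, however, the paper does \emph{not} use your trace self-consistency trick; it takes a unit $U\perp\xi$, computes $g(U,U)T_\xi\xi=-(2r+1)T_U\varphi^2U$, and invokes the $\varphi$-equivariance of $T$ from Theorem \ref{thm3} (formula \eqref{eq60}) to rewrite this as $-(2r+1)\varphi(T_U\varphi U)$, which vanishes because $g(U,\varphi U)=0$. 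You instead simply set $U=V=\xi$ in the umbilical identity and compare with $N=T_\xi\xi$, obtaining $2r\,T_\xi\xi=0$ directly. Your route is more elementary and slightly more general: it uses nothing about the $C_{12}$ structure beyond $h(B)=0$, so it would apply verbatim whenever the Lee vector field is vertical (compare the dual situation in Proposition \ref{Prop3.3}, where $T_\xi\xi=0$ is known and the same trace argument forces $h(B)=0$); the paper's route showcases the structural identity \eqref{eq60} instead. The remainder of your argument (skew-symmetry $g(T_UX,V)=-g(X,T_UV)$ to get $T\equiv 0$, then $A=0$ from Theorem \ref{thm3} to conclude that $\pi$ is totally geodesic) matches the paper's.
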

\begin{proof}
  Let $(F, \hat{g})$ be any fibre of $\pi$. Applying Corollary \ref{cor1}, the mean curvature vector field of $F$ is \[\frac{1}{2r+1}T_\xi \xi \mid_F,\]
  and using the hypothesis, we have
  \begin{equation}\label{eq64}
    T_UV=\frac{1}{2r+1}\hat{g}(U,V) T_\xi \xi , \, U,V \in TF.
  \end{equation}
  Considering $U \in TF $, $U\neq 0$, $U \perp \xi$ by Theorem \ref{thm3}, we obtain
  \begin{eqnarray*}
g(U,U)T_\xi \xi &=& - g(U, \varphi ^2 U)T_\xi \xi = -(2r+1)T_U \varphi ^2 U\\
&=& -(2r+1)\varphi (T_U \varphi U)=0.
  \end{eqnarray*}
  It follows that $T_\xi \xi =0$ and by \eqref{eq64} we get that $(F, \hat{g})$ is totally geodesic. This implies $T=0$ and being $A=0$, $\pi$ is a totally geodesic map (Proposition 1.9, \cite{Fal}).
\end{proof}

Combining Corollary \ref{newcor} with a theorem of Vilms \cite{Vilms} one obtains.

\textit{If $\pi$ is a contact-complex Riemannian submersion with totally umbilical fibres and the total space $M$ is a complete, simply connected $C_{12}$-manifold, then $M$ is a Riemannian product and $\pi$ acts as the canonical projection on one of the factors.}

\section{The case $\omega \neq 0$} \label{section3}
The aim of this section is the study of contact-complex submersions in the case that the total space $M$ belongs to a suitable subclass of $C_2 \oplus C_4\oplus C_6\oplus C_7 \oplus \bigoplus_{9 \leq h \leq 12} C_h$ and the Lee form $\omega$, a priori, does not vanish. More precisely, we consider the following cases;
\begin{itemize}
  \item [\textbf{A)}] $M$ is locally conformal (l.c.) almost cosymplectic,
  \item[\textbf{B)}] $M$ is l.c. cosymplectic,
  \item[\textbf{C)}] $M$ is normal and $d\phi=-2\omega \wedge \phi$.
\end{itemize}

Firstly, we recall that an ACM manifold $(M,\varphi , \xi , \eta, g)$ is said to be l.c. almost cosymplectic (l.c. cosymplectic) if there exist an open covering $\{U_i\}_{i\in I}$ of $M$ and, for any $i$, a smooth function $\rho : U_i \rightarrow \mathbb{R}$ such that the local a.c.m. structure
\begin{equation}\label{eq66}
  \varphi_i= \varphi \mid_{U_i},\,\,\xi_i= \exp (-\rho_i)\xi\mid_{U_i},\,\, \eta_i=\exp(\rho_i)\eta\mid_{U_i},\,\,g_i=\exp(2\rho_i) g\mid_{U_i}
\end{equation}
is almost cosymplectic (cosymplectic).

The Lee form $\omega$ is related to the conformal change \eqref{eq66} by $\omega\mid_{U_i}=d\rho_i$ and, if $\dim M = 2m+1 \geq 5$, the following equivalences hold
\begin{itemize}
  \item [\textbf{I)}]
  $M$ is l.c. almost cosymplectic $\Leftrightarrow $ $d \phi =-2 \omega \wedge \phi,\,d\eta=\eta \wedge \omega$\\
    $\Leftrightarrow $ $M$ is a $C_2 \oplus C_4 \oplus C_5 \oplus C_9 \oplus C_{12}$-manifold and $\omega = \nabla_\xi \eta +\frac{\delta \eta}{2m} \eta$,

    \item [\textbf{II)}] $M$ is l.c. cosymplectic $\Leftrightarrow $ for any $X, Y, Z \in \chi(M)$ one has
        \begin{eqnarray}\label{eq400}
       (\nabla_X \phi)(Y,Z)&=& \omega (Y) \phi(X,Z)- \omega(Z) \phi(X,Y)\nonumber\\
       & &+\omega(\varphi Y)g(X,Z)-\omega(\varphi Z) g(X,Y)\\
       &\Leftrightarrow & M \text{ is a } C_4 \oplus C_5 \oplus C_{12}-\text{manifold and } \omega =\nabla_{\xi} \eta+\frac{\delta \eta}{2m} \eta.\nonumber
        \end{eqnarray}
\end{itemize}
By Proposition \ref{Prop1}, we obtain that if $\pi: M \rightarrow M'$ is a contact-complex Riemannian submersion and $M$ is l.c. almost cosymplectic (l.c. cosymplectic), then $M$ falls in the class $C_2\oplus C_4 \oplus C_9 \oplus C_{12}$ ($C_4 \oplus C_{12}$) and $\omega =\nabla_{\xi} \eta$, so that the Lee vector field is $B=\nabla_\xi \xi,\,\, h(B)=T_\xi \xi $.\\
We state the following results, dealing with the case \textbf{A)}.
\begin{theorem}\label{theo3.1}
  Let $\pi: M \rightarrow M'$ be a contact-complex Riemannian submersion. Assume that $\dim M =2m+1\geq 5 $, $\dim M' =2n \geq 4$ and $(M,\varphi , \xi , \eta, g)$ is l.c. almost cosymplectic. Then, one has
  \begin{itemize}
    \item[\textbf{i)}] $(M', J, g')$ is a locally conformal almost K\"{a}hler manifold.
    \item[\textbf{ii)}] The invariant $A$ acts as $A_XY=\phi(X,Y)\,v(\varphi B)$.
   \item[\textbf{iii)}] If $r=m-n \geq 2$, any fibre inherits from $M$ an l.c. almost cosymplectic structure. If $r=1$, then the fibres are $C_9\oplus C_{12}$-manifolds.
   \item[\textbf{iv)}] The vector field $h(B)=T_\xi \xi$ represents the mean curvature of the fibres.
  \end{itemize}
\end{theorem}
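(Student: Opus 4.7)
The plan is to reduce each claim to an application of the preceding results after normalizing the Lee data. Since $M$ is l.c. almost cosymplectic, equivalence (I) yields $d\phi = -2\omega\wedge\phi$, $d\eta = \eta\wedge\omega$, and $\omega = \nabla_\xi\eta + \tfrac{\delta\eta}{2m}\eta$. Proposition~\ref{Prop1}(1) gives $\delta\eta = 0$, hence $\omega = \nabla_\xi\eta$, the Lee vector field satisfies $B = \nabla_\xi\xi$, and consequently $h(B) = T_\xi\xi$. This identity is the workhorse of the argument.

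Parts \textbf{(i)} and \textbf{(ii)} follow quickly. Proposition~\ref{Prop3} places $M'$ in the Gray--Hervella class $W_2\oplus W_4$, which is precisely the class of locally conformal almost K\"{a}hler manifolds, proving \textbf{(i)}. For \textbf{(ii)}, Proposition~\ref{Prop1}(2) already gives $A_XY = \eta(A_XY)\xi + \phi(X,Y)\,v(\varphi B)$, so it remains to show $\eta(A_XY) = 0$ for horizontal $X,Y$. For basic lifts we have $2A_XY = v([X,Y])$; combining the identity $d\eta(X,Y) = \tfrac{1}{2}\bigl(X\eta(Y) - Y\eta(X) - \eta([X,Y])\bigr)$ with $\eta(X) = \eta(Y) = 0$ and with $d\eta = \eta\wedge\omega$ (which vanishes on horizontal pairs), we obtain $\eta([X,Y]) = 0$, hence $\eta(A_XY) = 0$. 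Linearity extends this to all horizontal $X,Y$.

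For part \textbf{(iii)} with $r\geq 2$, Proposition~\ref{Prop2} gives $d\hat\phi = -2\hat\omega\wedge\hat\phi$ with $\hat\omega = \omega\mid_{TF}$, and restricting $d\eta = \eta\wedge\omega$ to $F$ yields $d\hat\eta = \hat\eta\wedge\hat\omega$; equivalence (I) applied on the fibre (of dimension $\geq 5$) then shows that $F$ is l.c. almost cosymplectic. For $r=1$, the fibre has dimension three, so the Chinea--Gonz\'alez components $C_2$ and $C_4$ vanish for dimensional reasons, and Proposition~\ref{Prop2}(1) rules out $C_5$ via $\hat\delta\hat\eta = 0$; together with the restricted equations on $F$ (noting that $\omega = \nabla_\xi\eta$ coincides with the Lee form $\hat\omega$ prescribed in Proposition~\ref{Prop2} for $r=1$), the class of $F$ is forced into $C_9 \oplus C_{12}$. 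Finally, for \textbf{(iv)}, Corollary~\ref{cor1} gives $N = 2rh(B) + T_\xi\xi = (2r+1)\,T_\xi\xi$, so the mean curvature vector $\tfrac{1}{2r+1}N$ of a fibre of dimension $2r+1$ equals $T_\xi\xi = h(B)$. The main delicate point is the $r=1$ case of \textbf{(iii)}, where one must verify that the dimensional collapse of $C_2, C_4$ together with $\hat\delta\hat\eta = 0$ genuinely leaves only $C_9 \oplus C_{12}$ as the ambient class for the fibre.
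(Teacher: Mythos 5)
Your overall route coincides with the paper's, and parts \textbf{ii)} and \textbf{iv)} are fine, but there are two genuine gaps. The serious one is in the $r=1$ case of \textbf{iii)}, exactly at the point you yourself flag as unverified. In dimension three the Chinea--Gonz\'alez classification reduces to $C_5\oplus C_6\oplus C_9\oplus C_{12}$, so after removing $C_2$, $C_4$ (dimensional reasons) and $C_5$ (via $\hat{\delta}\hat{\eta}=0$) the fibre is only placed in $C_6\oplus C_9\oplus C_{12}$; you never address the $C_6$-component, and ``the restricted equations on $F$'' are not, as written, an argument that kills it. The paper closes this hole by an explicit computation: the $C_6$-component of $\hat{\nabla}\hat{\phi}$ is measured by $\hat{\delta}\hat{\phi}(\hat{\xi})$, and using part \textbf{ii)} (so that $g(A_{X_i}\varphi X_i,\xi)=\phi(X_i,\varphi X_i)\,g(v(\varphi B),\xi)=0$ in a local adapted frame) one gets $\hat{\delta}\hat{\phi}(\hat{\xi})=\delta\phi(\xi)$, which vanishes because $M$ lies in $C_2\oplus C_4\oplus C_9\oplus C_{12}$ and hence has no $C_6$-component. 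Without this (or an equivalent argument, e.g.\ evaluating $d\hat{\eta}=\hat{\eta}\wedge\hat{\omega}$ on vectors orthogonal to $\hat{\xi}$), the conclusion $C_9\oplus C_{12}$ is not established.

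The second, smaller gap is in \textbf{i)}: the class $W_2\oplus W_4$ is \emph{not} ``precisely'' the class of locally conformal almost K\"{a}hler manifolds. One also needs the Lee form $\beta$ of $M'$ to be closed; this is automatic when $\dim M'\geq 6$ (where $d\Omega=-2\beta\wedge\Omega$ forces $d\beta=0$), but not when $\dim M'=4$, which the hypotheses allow. The paper supplies the missing step: by \eqref{eq12} $\beta$ is $\pi$-related to $\omega$, and $\omega$ is closed since $M$ is l.c.\ almost cosymplectic, hence $\beta$ is closed and $M'$ is locally conformal to a $W_2$-manifold. You should add this observation to make \textbf{i)} complete in the four-dimensional case.
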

\begin{proof}
  By Proposition \ref{Prop3}, $(M',J,g')$ is a $W_2\oplus W_4$-manifold. Moreover, the Lee form $\beta$ of $M'$ is closed, $\omega $ being closed. It follows that $(M',J,g')$ is l.c. to a $W_2$-manifold, namely it is l.c. almost K\"{a}hler \cite{Va2}.

  Let $X,Y$ be horizontal vector fields. Then, we have \[\eta(A_XY)=\frac{1}{2}g([X,Y], \xi)=-d\eta(X,Y)=-(\eta \wedge \omega)(X,Y)=0.\]
 By Proposition \ref{Prop1}, we have $A_XY=\phi(X,Y)v(\varphi B)$.

 Now, assume $r=m-n \geq 2$ and consider a fibre $(F,\hat{\varphi}, \hat{\xi}, \hat{\eta}, \hat{g})$ of $\pi$. By Proposition \ref{Prop2}, $\hat{\omega}=\omega\mid_{TF}$ is the Lee form of $F$, $d\hat{\phi}=-2\hat{\omega}\wedge \hat{\phi}$ and, being $d \eta=\eta \wedge \omega$, we have $d \hat{\eta}=\hat{\eta} \wedge \hat{\omega}$. It follows that $F$ is l.c. almost cosymplectic.

 We recall that, in dimension $3$, the class of a.c.m. manifolds reduces to $C_5\oplus C_6 \oplus C_9 \oplus C_{12}$. Hence, if $r=1$, any fibre $(F,\hat{\varphi}, \hat{\xi}, \hat{\eta}, \hat{g})$ is in the class $C_6 \oplus C_9 \oplus C_{12}$ and the $C_6$-component of $\hat{\nabla}\hat{\phi}$ is determined by $\delta\hat{\phi}(\hat{\xi})$. We consider a local orthonormal frame $\{X_1,...,X_{2n},U_1,U_2,\xi\}$ defined in an open set $W \subset M$, such that $U_1,U_2$ are tangent to $F$. In $W$, by \textbf{ii)} we have
 \begin{eqnarray*}
\delta\phi(\xi)&=& \sum_{i=1}^{2n}g((\nabla_{X_i}\varphi)X_i,\xi)+\sum_{i=1}^{2}g((\nabla_{U_i}\varphi)U_i,\xi)\\
&=&\sum_{i=1}^{2n}g(A_{X_i}\varphi X_i,\xi)+\delta \hat{\phi }(\hat{\xi})=\delta \hat{\phi }(\hat{\xi}).
 \end{eqnarray*}
 So, since the $C_6$-component of $\nabla \phi$ vanishes, we have $\delta \hat{\phi}(\hat{\xi})=\delta \phi(\xi)=0$ and \textbf{iii)} is proved.

 Property \textbf{iv)} follows by \eqref{eq26-1}, since in this case $h(B)=T_\xi \xi$.
\end{proof}
\begin{corollary}\label{cor3.2}
\textit{Let $\pi: M \rightarrow M'$ be a contact-complex Riemannian submersion as in Theorem \ref{theo3.1}. Then, we have;}
\begin{itemize}
  \item[\textbf{i)}] \textit{The horizontal distribution is totally geodesic if and only if $\nabla_\xi \xi$ is horizontal.}
  \item[\textbf{ii)}] \textit{Each fibre of $\pi$ is a minimal submanifold of $M$ if and only if $\nabla_\xi \xi$ is vertical if and only if $T_\xi \xi=0.$}
\end{itemize}
\end{corollary}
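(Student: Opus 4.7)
The plan is to reduce both statements to the structural description of the O'Neill invariants given in Theorem~\ref{theo3.1}, after pinning down the Lee vector field $B$ and its interaction with the $\varphi$-preserved orthogonal splitting $TM=\mathcal{H}\oplus\mathcal{V}$.

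First, I would observe that in the l.c.\ almost cosymplectic case we have $\omega=\nabla_{\xi}\eta$ (the characterization \textbf{I)} together with $\delta\eta=0$ from Proposition~\ref{Prop1}); since $(\nabla_{\xi}\eta)X=g(\nabla_{\xi}\xi,X)$, this identifies
\[
B=\omega^{\sharp}=\nabla_{\xi}\xi,\qquad h(B)=T_{\xi}\xi,
\]
as already noted in the paragraph preceding the corollary. Moreover $\omega(\xi)=0$ (Proposition~\ref{Prop1}) gives $g(B,\xi)=0$, so $v(B)\perp\xi$. Finally, because $J\circ\pi_*=\pi_*\circ\varphi$ and $\varphi\xi=0$, the map $\varphi$ preserves both $\mathcal{H}$ and $\mathcal{V}$, so that $v(\varphi B)=\varphi\bigl(v(B)\bigr)$ and $\varphi$ restricted to $\mathcal{V}$ has kernel exactly $\mathbb{R}\xi$. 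Combined with $v(B)\perp\xi$, this yields the key equivalence
\[
v(\varphi B)=0\ \iff\ v(B)=0\ \iff\ B\in\mathcal{H}.
\]

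For \textbf{i)}, the horizontal distribution is totally geodesic iff $A_XY=0$ for all $X,Y\in\chi^{h}(M)$. By Theorem~\ref{theo3.1}~\textbf{ii)}, $A_XY=\phi(X,Y)\,v(\varphi B)$, and since $\phi$ does not vanish identically on $\mathcal{H}\times\mathcal{H}$, this is equivalent to $v(\varphi B)=0$, hence by the displayed equivalence to $\nabla_{\xi}\xi=B$ being horizontal. (The conditions $A_XU$ and $A_X\xi$ are handled by the same reasoning via Proposition~\ref{Prop1} item~2, once one notes that in the present class $\omega(\varphi U)=0$ for vertical $U$ since $\varphi U$ is vertical and $\omega$ restricted to $\mathcal{V}$ acts through $B$ with $v(B)\perp\xi$; in fact all the pieces collapse once $v(B)=0$.)

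For \textbf{ii)}, by Theorem~\ref{theo3.1}~\textbf{iv)} the mean curvature vector field of every fibre is represented (up to the normalization factor $2r+1$ as in Corollary~\ref{cor1}) by $h(B)=T_{\xi}\xi$. Therefore the fibres are minimal iff $h(B)=0$, iff $B=\nabla_{\xi}\xi$ is vertical, iff $T_{\xi}\xi=0$, which is the chain of equivalences claimed.

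The only genuine obstacle is the $\varphi$-algebra behind the equivalence $v(\varphi B)=0\Leftrightarrow v(B)=0$; once one records that $\varphi$ respects the splitting and that $g(B,\xi)=0$, the rest is an application of Theorem~\ref{theo3.1} parts \textbf{ii)} and \textbf{iv)}.
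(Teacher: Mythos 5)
Your proposal is correct and follows exactly the route the paper intends: the corollary is stated without proof as an immediate consequence of Theorem \ref{theo3.1}, parts \textbf{ii)} (giving $A_XY=\phi(X,Y)\,v(\varphi B)$, non-degenerate $\phi$ on $\mathcal{H}$) and \textbf{iv)} (mean curvature represented by $h(B)=T_\xi\xi$), together with $B=\nabla_\xi\xi$. Your explicit justification of $v(\varphi B)=0\iff v(B)=0$ via $\varphi$ preserving the splitting and $g(B,\xi)=0$ is the one nontrivial step, and you handle it correctly.
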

An invariant submanifold $N$ of an a.c.m. manifold $(M,\varphi , \xi , \eta, g)$ is said to be superminimal if $\nabla_U \varphi =0,$ for any $U \in \chi(N)$. Obviously, a superminimal submanifold is minimal.

Let $\pi: M \rightarrow M'$ be a contact-complex Riemannian submersion. We say that $\pi$ has superminimal fibres if each fibre of $\pi$ is a superminimal submanifold of $M$, namely if and only if $\nabla_U \varphi =0$, for any $U \in \chi^v(M)$.
\begin{proposition}\label{prop3.3.3}
Let $\pi: M \rightarrow M'$ be a contact-complex Riemannian submersion as in Theorem \ref{theo3.1} and assume that $r=m-n \geq 1$. The following conditions are equivalent
\begin{itemize}
  \item[\textbf{i)}] $\pi$ has superminimal fibres,
  \item[\textbf{ii)}] each fibre inherits from $M$ a cosymplectic structure ant the invariant $T$ satisfies:
      \begin{equation*}
        T_U \varphi V= \varphi (T_UV),\, U,V \in \chi^v(M).
      \end{equation*}
\end{itemize}
\end{proposition}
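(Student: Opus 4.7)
The plan is to decompose $(\nabla_U\varphi)E$ into its horizontal and vertical components with respect to the submersion, and to show that under \textbf{ii)} the Lee form $\omega$ is forced to vanish, so that Theorem \ref{theo1} applies; what remains is then a bookkeeping of what that theorem and the hypothesis leave to prove.

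For \textbf{i)} $\Rightarrow$ \textbf{ii)}, I would observe that for any $U,V\in\chi^v(M)$ the submersion splitting gives
\[
(\nabla_U\varphi)V=(\hat\nabla_U\hat\varphi)V+T_U\varphi V-\varphi(T_UV),
\]
with the first summand vertical (the fibre's own covariant derivative of $\hat\varphi$) and the last two horizontal (since $\varphi$ preserves both $\mathcal V$ and $\mathcal H$). Superminimality forces each piece to vanish, giving simultaneously $\hat\nabla\hat\varphi=0$ on every fibre, i.e.\ the cosymplectic condition, together with $T_U\varphi V=\varphi(T_UV)$.

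For \textbf{ii)} $\Rightarrow$ \textbf{i)}, I would first deduce $\omega=0$. Applying the commutation with $V=\xi$ yields $\varphi(T_U\xi)=0$, and since $T_U\xi=h(\nabla_U\xi)$ is horizontal, a second application of $\varphi$ gives $T_U\xi=0$ for every $U\in\chi^v(M)$. In particular $T_\xi\xi=0$, so by Theorem \ref{theo3.1} the Lee vector field $B$ satisfies $h(B)=T_\xi\xi=0$, whence $B$ is vertical. On the other hand, the cosymplectic assumption forces the fibre's Lee form $\hat\omega$ to vanish; since $\hat\omega=\omega|_{TF}$ (for $r\geq 2$ by Proposition \ref{Prop2}, and for $r=1$ because $\hat\omega(U)=(\nabla_\xi\eta)U=\omega(U)$ for $U\in TF$), $\omega$ annihilates every vertical vector and $B$ is horizontal as well. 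Hence $B=0$, $\omega=0$, and $M$ is almost cosymplectic.

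With $\omega=0$, Theorem \ref{theo1} gives $A=0$, and Proposition \ref{Prop4} kills the horizontal part of $(\nabla_U\varphi)X$; matters reduce to showing that the vertical part $T_U\varphi X-\varphi(T_UX)$ vanishes for every $X\in\chi^h(M)$. This is the main obstacle: I need to promote the commutation of $T_U$ with $\varphi$ from vertical to horizontal arguments. The idea is to pair with an arbitrary $V\in\chi^v(M)$ and exploit that both $\varphi$ and the endomorphism $T_U$ of $TM$ are $g$-skew-adjoint; rewriting $g(T_U\varphi X,V)$ and $g(\varphi(T_UX),V)$ transforms each into an expression involving $T_UV$ and $T_U\varphi V$ on horizontal arguments, and hypothesis \textbf{ii)} identifies them. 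Since $T_U\varphi X-\varphi(T_UX)$ is vertical and orthogonal to every vertical vector, it must vanish, and together with $(\nabla_U\varphi)V=0$ this yields $\nabla_U\varphi=0$ for every vertical $U$, i.e.\ $\pi$ has superminimal fibres.
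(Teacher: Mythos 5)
Your proof is correct, and your forward direction coincides with the paper's (same splitting of $(\nabla_U\varphi)V$ into $(\hat{\nabla}_U\hat{\varphi})V$ and $T_U\varphi V-\varphi(T_UV)$). The converse, however, takes a genuinely different route. The paper never establishes $\omega=0$: it kills the horizontal part of $(\nabla_U\varphi)X$ directly from \eqref{eq16}, using the fact that Theorem~\ref{theo3.1} gives $A_XY=\phi(X,Y)\,v(\varphi B)$, hence $\eta(A_XY)=0$, hence $A_X\xi=0$ and $\eta(U)(A_{\varphi X}\xi-\varphi(A_X\xi))=0$; the vertical part is then removed by essentially the same duality pairing you propose, except that the paper applies skew-adjointness to the single operator $\nabla_U\varphi$ (writing $g((\nabla_U\varphi)X,V)=-g((\nabla_U\varphi)V,X)=0$) rather than to $T_U$ and $\varphi$ separately --- the two computations are equivalent. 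Your detour through $\omega=0$ is valid: setting $V=\xi$ in the commutation relation gives $\varphi(T_U\xi)=0$, and since $T_U\xi$ is horizontal while $\ker\varphi=\mathbb{R}\xi$ is vertical, $T_U\xi=0$ and in particular $h(B)=T_\xi\xi=0$; the cosymplectic fibres have vanishing Lee form, which Proposition~\ref{Prop2} (and, for $r=1$, the identity $\hat{\omega}(U)=(\nabla_\xi\eta)U=\omega(U)$) identifies with $\omega\mid_{TF}$, so $v(B)=0$ as well and $B=0$. What your argument buys is a fact the paper leaves implicit: either condition of the proposition forces the l.c.\ almost cosymplectic total space to be genuinely almost cosymplectic, so the statement carries no content outside the setting of Theorem~\ref{theo1}. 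What it costs is length and extra dependencies (Theorem~\ref{theo1}, plus the $r\geq 2$ versus $r=1$ case split for the fibre Lee form), where the paper's argument closes in three lines without leaving the locally conformal framework.
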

\begin{proof}
For any $ U,V \in \chi^v(M)$, one has
\[h((\nabla_U \varphi)V)=T_U\varphi V-\varphi(T_UV),\]
\[v((\nabla_U \varphi)V)=(\hat{\nabla}_U \varphi)V,\]
and $\hat{\nabla}$ represents the Levi-Civita connection of the fibers. So, if $\pi$ has superminimal fibers, \textbf{ii)} holds.

Conversely, assuming \textbf{ii)}, we have for any $U,V \in \chi^v(M)$ $(\nabla_U \varphi)V=0.$ Let $X$ be horizontal. Then, by \eqref{eq16} and Theorem \ref{theo3.1}, we have
\begin{equation*}
  h((\nabla_U \varphi)X)=\eta(U)(A_{\varphi X}\xi-\varphi (A_X \xi))=0,\, U \in \chi^v(M).
\end{equation*}
On the other hand, we have $v((\nabla_U \varphi)X)=0, \,U\in \chi^v(M)$, since $g((\nabla_U \varphi)X,V)=-g((\nabla_U \varphi)V,X)=0, \, V \in \chi^v(M)$. It follows $\nabla_U\varphi=0, \, U \in \chi^v(M)$, and $\pi $ has superminimal fibres.
\end{proof}
In case \textbf{B)}, we state the next results dealing with the structure of the fibres and the behaviour of their second fundamental form.
\begin{theorem}\label{theo3.2}
  Let $\pi: M \rightarrow M'$ be a contact-complex Riemannian submersion, $\dim M=2m+1 \geq 5$, $\dim M'=2n+1 \geq 4$. If $(M,\varphi , \xi , \eta, g)$ is l.c. cosymplectic, then $(M', J, g')$ is l.c. K\"{a}hler, the invariant $T$ satisfies
  \begin{equation}\label{eq300}
    T_\xi U=\eta(U)T_\xi \xi, \quad T_\xi X=-\omega(X)\xi, \, U \in \chi^v(M),\, X \in \chi^h(M).
  \end{equation}
  Moreover, if $r=m-n \geq 2$, any fibre of $\pi$ is l.c. cosymplectic, if $r=1$, then the fibres are $C_{12}$-manifolds.
\end{theorem}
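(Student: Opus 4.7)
The plan is to follow the strategy of Theorem \ref{theo3.1}, but now exploiting the stronger structural identity \eqref{eq400} available for l.c. cosymplectic total spaces. Since every l.c. cosymplectic manifold is l.c. almost cosymplectic, Proposition \ref{Prop3} places $(M', J, g')$ in $W_2 \oplus W_4$, and the closedness of $\omega$ (hence of $\beta$, by $\beta \circ \pi_* = \omega$) upgrades this to locally conformal K\"ahler; equivalently, inserting horizontal lifts of $X', Y', Z' \in \chi(M')$ into \eqref{eq400} produces the defining identity of l.c. K\"ahler for $\Omega$.

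For the $T$-formulas, I would first derive $\nabla_X \xi = \eta(X)\,B$ for every $X \in \chi(M)$. Setting $Z = \xi$ in \eqref{eq400} and using $\omega(\xi) = 0$ from Proposition \ref{Prop1} gives $g(Y, \varphi \nabla_X \xi) = -\omega(\varphi Y)\,\eta(X)$; replacing $Y$ by $\varphi Y$ and invoking the almost contact metric identity $g(X, Y) = g(\varphi X, \varphi Y) + \eta(X)\eta(Y)$ yields $\nabla_X \xi = \eta(X)\,B$, whence $B = \nabla_\xi \xi$ and $h(B) = T_\xi \xi$. Therefore $T_\xi U = T_U \xi = h(\nabla_U \xi) = \eta(U)\,T_\xi \xi$, proving the first identity in \eqref{eq300}. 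For the second, decompose $\nabla_\xi U = T_\xi U + \hat{\nabla}_\xi U$ into its horizontal and vertical parts (the vertical part being the induced covariant derivative on the fibre); the relation $g(T_\xi X, U) = -g(X, T_\xi U) = -\eta(U)\,\omega(X)$ then shows that $T_\xi X$ lies along $\xi$ with coefficient $-\omega(X)$, giving $T_\xi X = -\omega(X)\,\xi$.

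The decisive observation for the fibres is that $\varphi$ preserves both $\mathcal{V}$ and $\mathcal{H}$ (because $\pi$ is contact-complex), so for $U, V, W \in \chi^v(M)$ one has $\phi(T_U V, W) = g(T_U V, \varphi W) = 0$ and therefore $(\nabla_U \phi)(V, W) = (\hat{\nabla}_U \hat{\phi})(V, W)$. Restricting \eqref{eq400} to vertical arguments, and using $\hat{\omega} = \omega|_{TF}$ from Proposition \ref{Prop2} (which holds directly when $r \geq 2$, and when $r = 1$ via $\hat{\omega}(U) = (\nabla_\xi \eta)(U) = \omega(U)$, since $\delta \eta = 0$), produces the l.c. cosymplectic defining identity on $F$; this settles the case $r \geq 2$. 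When $r = 1$, the Chinea--Gonzalez class in dimension three reduces to $C_5 \oplus C_6 \oplus C_9 \oplus C_{12}$, and the l.c. cosymplectic identity just derived forces $F \in C_4 \oplus C_5 \oplus C_{12}$; since $C_4$ is trivial in dimension three, $F$ lies in $C_5 \oplus C_{12}$, and $\hat{\delta}\hat{\eta} = 0$ (Proposition \ref{Prop2}) finally eliminates $C_5$, leaving $F \in C_{12}$. I expect this intersection argument for $r = 1$ to be the main obstacle, since it hinges on confirming that the fibre genuinely satisfies \eqref{eq400} with the correct Lee form, which in turn requires the three-dimensional identification $\hat{\omega} = (\nabla_\xi \eta)|_{TF}$ rather than the trace argument available in higher dimension.
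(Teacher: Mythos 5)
Your overall strategy coincides with the paper's: transfer \eqref{eq400} to $M'$ via horizontal lifts, extract the $T$-formulas from $(\nabla_U\phi)(Y,\xi)$ together with the skew-symmetry of $T_\xi$, and restrict \eqref{eq400} to the fibres. The $T$-computation and the $r\geq 2$ case are fine and essentially identical to the paper's. Two points, however, need repair.

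First, your primary argument for the l.c.\ K\"ahler property of $M'$ is wrong as stated: a $W_2\oplus W_4$-manifold with closed Lee form is only locally conformal \emph{almost} K\"ahler (this is exactly what Theorem \ref{theo3.1}\,\textbf{i)} concludes in the weaker setting); to get l.c.\ K\"ahler you must kill the $W_2$-part. Your parenthetical \enquote{equivalently} clause --- pushing \eqref{eq400} down to obtain $(\nabla'_{X'}\Omega)(Y',Z')$ expressed purely through $\beta$, $\Omega$, $g'$ --- is not equivalent to the first claim; it is the actual proof, and it is what the paper does. Second, for $r=1$ your intersection argument $(C_5\oplus C_6\oplus C_9\oplus C_{12})\cap(C_4\oplus C_5\oplus C_{12})=C_5\oplus C_{12}$ invokes the characterization \enquote{\eqref{eq400} $\Leftrightarrow$ $C_4\oplus C_5\oplus C_{12}$}, which the paper only states for $\dim\geq 5$; in dimension $3$ the decomposition degenerates (e.g.\ the Lee form is defined by \eqref{eq3}, not \eqref{eq2}), so you cannot simply quote it. The conclusion is nevertheless correct: with $\hat\omega=\hat\nabla_{\hat\xi}\hat\eta$ (so $\hat\omega(\hat\xi)=0$) the right-hand side of \eqref{eq400} evaluated on an adapted frame $\{U_1,\varphi U_1,\hat\xi\}$ reduces exactly to the $C_{12}$-tensor of \eqref{eq59}; this direct componentwise verification is how the paper closes the case, and is the cleanest way to fill the gap you yourself flagged.
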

\begin{proof}
  Assume that $(M', J, g')$ is l.c. cosymplectic, so \eqref{eq400} holds. By direct calculation, applying \eqref{eq400} and Proposition \ref{Prop3}, the action of the covariant derivative $\nabla' \Omega$, $\Omega$ being the fundamental form of $M'$, is given by
  \begin{eqnarray*}
 (\nabla'_{X'}\Omega)(Y',Z')\circ \pi&=&\{\beta(Y')\Omega(X',Z')-\beta(Z')\Omega(X',Y')+\beta(JY')g'(X',Z')\\
 & &- \beta(JZ')g'(X',Y')\}
\circ \pi ,\,\,\, X',Y',Z' \in \chi(M').
\end{eqnarray*}
Note that the Lee form $\beta$ of $M'$ is closed, $\omega $ being closed. It follows that $(M',J,g')$ is an l.c. K\"{a}hler manifold.

Let $U$ be a vertical vector fields. By \eqref{eq400}, for any $Y \in \chi^h(M)$ we have
\begin{equation*}
  g(T_U \xi , \varphi Y)=g(\nabla_U \xi, \varphi Y)=(\nabla_U \phi)(Y,\xi)=\omega(\varphi Y)\eta(U)=\eta(U)g(T_\xi \xi , \varphi Y).
\end{equation*}
It follows $T_\xi U=T_U \xi=\eta(U)T_\xi \xi$. Thus, applying well-known properties of the operator $T_\xi$, we also obtain the second relation is \eqref{eq300}.

Let $(F,\hat{\varphi}, \hat{\xi}, \hat{\eta}, \hat{g})$ be a fibre of $\pi$. If $\dim F \geq 5$, by \eqref{eq400} and Proposition \ref{Prop1}, for any $U,V,W \in \chi(F)$, we have
\begin{equation*}
  (\hat{\nabla}_U \hat{\phi})(V,W)=\hat{\omega}(V)\hat{\phi}(U,W)-\hat{\omega}(W)\hat{\phi}(U,V)+\hat{\omega}(\hat{\varphi }V)\hat{g}(U,W)-\hat{\omega}(\hat{\varphi}W)\hat{g}(U,V).
\end{equation*}
It follows that $F$ is l.c. cosymplectic.

Assume that $\dim F =3$. By Theorem \ref{theo3.1} we know that $F$ is a $C_9 \oplus C_{12}$-manifold. Given a point $p \in F$, we consider an orthonormal frame $\{U_1, U_2=\varphi U_1, U_3=\hat{\xi}\}$ defined in an open neighbourhood $W$ of $p$. Since the Lee form of $F$ is $\hat{\omega}=\hat{\nabla}_{\hat{\xi}}\hat{\eta}=\omega/_{TF}$,
by \eqref{eq400} we easily obtain
\begin{equation*}
  (\hat{\nabla}_{U_i}\hat{\phi})(U_j , U_k)=\hat{\eta}(U_i)\bigg((\hat{\nabla}_{\hat{\xi}}\hat{\eta})\,\hat{\varphi}U_j \, \hat{\eta}(U_k)-\hat{\eta}(U_j)(\hat{\nabla}_{\hat{\xi}}\hat{\eta})\,\hat{\varphi}U_k\bigg)
\end{equation*}
for any $i,j,k \in \{1,2,3\}$. Hence, one has
\begin{equation*}
  (\hat{\nabla}_{U_i}\hat{\phi})(U_j)=-\hat{\eta}(U_i)\bigg(\hat{\eta}(U_j)\hat{\varphi}(\hat{\nabla}_{\hat{\xi}}\hat{\xi})+(\hat{\nabla}_{\hat{\xi}}\hat{\eta})\hat{\varphi}U_j\, \hat{\xi}\bigg),\,i,j \in \{1,2,3\}.
\end{equation*}
It follows that $\hat{\nabla}\hat{\varphi}$ satisfies \eqref{eq59}, so $(F,\hat{\varphi}, \hat{\xi}, \hat{\eta}, \hat{g})$ is a $C_{12}$-manifold.
\end{proof}

\begin{proposition}\label{prop3.5}
Let $\pi: M \rightarrow M'$ be a contact-complex Riemannian submersion as in Theorem \ref{theo3.2}. If $M$ is l.c. cosymplectic and $r=m-n \geq 1$, the following conditions are equivalent:
\begin{itemize}
  \item[\textbf{i)}] $\pi$ has superminimal fibers,
  \item[\textbf{ii)}] each fibre of $\pi$ is a minimal submanifold of $M$ and inherits from $M$ a cosymplectic structure.
\end{itemize}
\end{proposition}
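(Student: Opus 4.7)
The plan is to exploit two facts from the l.c.\ cosymplectic setting: (a) the relation $h(B) = T_\xi \xi$ together with Corollary~\ref{cor1}, which makes the mean curvature of the fibres a multiple of $T_\xi\xi$; and (b) the equality $\hat{\omega} = \omega\mid_{TF}$, which follows from Proposition~\ref{Prop2} because $\delta\eta=0$ gives $\omega = \nabla_\xi\eta$ (so Prop.~\ref{Prop2} covers both $r\geq 2$ and $r=1$ uniformly). With these tools, both implications can be reduced to showing $B=0$, i.e.\ $\omega=0$.

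For \textbf{i)}$\Rightarrow$\textbf{ii)}: assuming $\nabla_U\varphi = 0$ for every vertical $U$, I would first decompose, for vertical $U,V$, the identity $(\nabla_U\varphi)V = (\hat{\nabla}_U\hat{\varphi})V + (T_U\varphi V-\varphi(T_UV))$ and project onto the vertical distribution to read off $(\hat{\nabla}_U\hat{\varphi})V = 0$, giving each fibre a cosymplectic structure. For minimality I would evaluate the superminimality hypothesis at $U=\xi$, obtaining $(\nabla_\xi\varphi)\xi = -\varphi(\nabla_\xi\xi) = -\varphi(B) = 0$; since $\ker\varphi = \mathbb{R}\xi$ and $\eta(B) = g(\nabla_\xi\xi,\xi) = 0$, this forces $B=0$, hence $T_\xi\xi = h(B) = 0$, and Corollary~\ref{cor1} yields the minimality of the fibres.

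For \textbf{ii)}$\Rightarrow$\textbf{i)}: minimality gives $N = (2r+1)T_\xi\xi = 0$, so $h(B)=T_\xi\xi=0$ and $B$ is vertical. On the other hand, a cosymplectic ACM-structure has vanishing Lee form, so $\hat{\omega}=0$; invoking $\hat{\omega} = \omega\mid_{TF}$ (as justified above) yields $\omega(U) = g(B,U) = 0$ for every vertical $U$, i.e.\ $B$ is horizontal. Combining the two, $B=0$ and therefore $\omega = 0$. Substituting $\omega = 0$ into the defining identity \eqref{eq400} gives $\nabla\phi = 0$, so $M$ is in fact cosymplectic; then $\nabla_U\varphi = 0$ holds for \emph{every} vector field $U$, and in particular $\pi$ has superminimal fibres.

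The main delicate point is reconciling the two dimensional regimes $r\geq 2$ and $r=1$ in Proposition~\ref{Prop2}, since the formula for $\hat{\omega}$ differs. However, the l.c.\ cosymplectic condition combined with $\delta\eta = 0$ (Prop.~\ref{Prop1}) collapses $\omega = \nabla_\xi\eta + \tfrac{\delta\eta}{2m}\eta$ to $\omega = \nabla_\xi\eta$, so $\hat{\omega}(U) = (\nabla_\xi\eta)U = \omega(U)$ in both cases, and the argument proceeds uniformly.
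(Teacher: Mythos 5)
Your proof is correct, but it follows a genuinely different route from the paper's. The paper's proof consists of a single computation: using \eqref{eq400} it shows $h((\nabla_U\varphi)V)=T_U\varphi V-\varphi(T_UV)=\phi(U,V)\,T_\xi\xi-g(U,V)\,\varphi(T_\xi\xi)$ for vertical $U,V$, so that the condition $T_U\varphi V=\varphi(T_UV)$ appearing in Proposition~\ref{prop3.3.3} is equivalent to $T_\xi\xi=0$, which by Corollary~\ref{cor3.2} is exactly minimality; the equivalence then drops out of the already-proved characterization of superminimality. You instead argue both implications from scratch by showing that each condition forces the Lee vector field to vanish: from \textbf{i)} you evaluate superminimality at $U=\xi$ to get $\varphi(B)=\varphi(\nabla_\xi\xi)=0$, hence $B=0$ since $\ker\varphi=\mathbb{R}\xi$ and $\eta(B)=0$; from \textbf{ii)} you kill $h(B)=T_\xi\xi$ by minimality and $v(B)$ by the vanishing of the fibre Lee form (your reconciliation of the $r\geq 2$ and $r=1$ regimes via $\omega=\nabla_\xi\eta$ is correct and is a point the paper leaves implicit), and then feed $\omega=0$ back into \eqref{eq400} to conclude that $M$ is cosymplectic. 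Your approach is longer and does not reuse Proposition~\ref{prop3.3.3}, but it buys a sharper structural conclusion that the paper does not state: under either condition the l.c.\ cosymplectic total space is in fact globally cosymplectic. Both arguments are sound.
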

\begin{proof}
  By \eqref{eq400}, for any $U,V \in \chi^v(M)$, $X \in \chi^h(M)$, we have
  \begin{eqnarray*}
g((\nabla_U \varphi)V,X)&=&\omega(X)\phi(U,V)+\omega(\varphi X)g(U,V)\\
&=&g(T_\xi \xi, X)\phi(U,V)-g(\varphi (T_\xi \xi),X)g(U,V).
  \end{eqnarray*}
  So, we obtain:
  \begin{eqnarray*}
T_U\varphi V-\varphi(T_UV)&=&h((\nabla_U \varphi)V)\\
&=&\phi(U,V)T_\xi \xi -g(U,V)\varphi(T_\xi \xi).
  \end{eqnarray*}
  Then, the statement follows by Proposition \ref{prop3.3.3} and Corollary \ref{cor3.2}.
\end{proof}
In case \textbf{C)}, let $(M^{2m+1}, \varphi, \xi, \eta , g)$, $m \geq 2$ be the total space of a contact-complex Riemannian submersion. Applying the theory developed in \cite{Chi-Gon}, \cite{Fa2} one has:

$M$ is normal and $d\phi =-2\omega \wedge \phi$ if and only if $M$ is a $C_4 \oplus C_6 \oplus C_7$-manifold if and only if $\nabla \phi$ acts as
\begin{eqnarray}\label{eq3.4}
(\nabla_X \phi)(Y,Z)&=& \omega(Y)\phi(X,Z)-\omega(Z)\phi(X,Y)+\omega(\varphi Y)g(\varphi X, \varphi Z)\nonumber\\
& &-\omega(\varphi Z) g(\varphi X,\varphi Y)+\eta(Y)(\nabla_{\varphi X}\eta)Z+\eta(Z)(\nabla_Y \eta)\varphi X
\end{eqnarray}
for any $X,Y,Z \in \chi(M)$.
\begin{theorem}\label{theo3.3}
  Let $\pi: M \rightarrow M'$ be a contact-complex Riemannian submersion, $\dim M=2m+1 \geq 5$, $\dim M'=2n$. Assume that $M$ is a $C_4 \oplus C_6\oplus C_7$-manifold. Then, the following properties hold
  \begin{itemize}
    \item[\textbf{i)}] $M'$ is a $W_4$-manifold.
    \item[\textbf{ii)}] The invariant $A$ satisfies
    \[A_{\varphi X}U=\varphi (A_XU),\, A_XY=A_{\varphi X}\varphi Y,\, X,Y \in \chi^h(M),\, U \in \chi^v(M).\]
    \item[\textbf{iii)}] The invariant $T$ satisfies
    \[T_\xi \circ \varphi =\varphi \circ T_\xi.\]
    \item[\textbf{iv)}] If $r=m-n\geq 2 $, then any fibre is a $C_4 \oplus C_6 \oplus C_7$-manifold, if $r=1$, the fibres fall in the class $C_6.$
  \end{itemize}
\end{theorem}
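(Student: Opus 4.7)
The plan is to exploit the explicit formula \eqref{eq3.4} for $\nabla\phi$ by substituting suitable vertical and horizontal entries, reducing each claim to an identity among the O'Neill tensors $A$ and $T$. Since the condition $d\phi=-2\omega\wedge\phi$ is already built into \eqref{eq3.4}, the full force of Propositions~\ref{Prop1}, \ref{Prop2} and \ref{Prop3} is available.

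For \textbf{i)}, Proposition~\ref{Prop3} already places $(M',J,g')$ in $W_2\oplus W_4$, so it remains only to prove that $J$ is integrable. Normality of $M$ gives $N_\varphi(X,Y)=-2d\eta(X,Y)\,\xi$, which is purely vertical; for basic horizontal lifts $X,Y$ of $X',Y'\in\chi(M')$, pushing forward by $\pi_*$ and using $J\circ\pi_*=\pi_*\circ\varphi$ converts $N_\varphi(X,Y)$ into $N_J(X',Y')$, while the vertical right-hand side projects to zero, so $N_J=0$ and $M'$ falls in $W_4$. For \textbf{ii)}, Proposition~\ref{Prop1}(2) reduces everything to showing $A_{\varphi X}\xi=\varphi(A_X\xi)$. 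Substituting $Y=\xi$ and basic horizontal $Z$ in \eqref{eq3.4}, the right-hand side collapses (using $\varphi\xi=0$, $\omega(\xi)=0$, $\eta(Z)=0$) to $(\nabla_{\varphi X}\eta)Z=g(A_{\varphi X}\xi,Z)$; on the other hand, $(\nabla_X\phi)(\xi,Z)=\eta((\nabla_X\varphi)Z)$ simplifies via the skew-symmetries of $A_X$ and $\varphi$ to $g(\varphi(A_X\xi),Z)$, yielding the claim. The identity $A_XY=A_{\varphi X}\varphi Y$ then follows from Proposition~\ref{Prop1}(2): the $v(\varphi B)$-contributions cancel because $\phi(\varphi X,\varphi Y)=\phi(X,Y)$ on $\chi^h(M)$, and the $\xi$-components agree once $A_{\varphi X}\xi=\varphi(A_X\xi)$ is used together with the skew-symmetry of $A_X$.

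For \textbf{iii)}, I set $X=\xi$ in \eqref{eq3.4}: every summand vanishes, either because $\varphi\xi=0$, $\omega(\xi)=0$ or $\phi(\xi,\cdot)=0$, giving $\nabla_\xi\varphi=0$. Since $\varphi$ preserves both $\mathcal{V}$ and $\mathcal{H}$, and $T_\xi$ acts as the opposite projection of $\nabla_\xi$ (namely $v\circ\nabla_\xi$ on $\chi^h(M)$ and $h\circ\nabla_\xi$ on $\chi^v(M)$), the commutation $T_\xi\circ\varphi=\varphi\circ T_\xi$ is immediate.

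For \textbf{iv)}, the integrability of $\mathcal{V}$ makes every bracket in $N_\varphi(U,V)$ vertical when $U,V$ are tangent to a fibre $F$, so $N_{\hat\varphi}(U,V)=N_\varphi(U,V)|_{TF}$, and likewise for $d\eta$; hence normality of $M$ passes to $F$. If $r\geq 2$, Proposition~\ref{Prop2} yields $d\hat\phi=-2\hat\omega\wedge\hat\phi$, placing $F$ in $C_4\oplus C_6\oplus C_7$. If $r=1$, applying $\nabla_\xi\varphi=0$ to $\xi$ gives $\varphi(\nabla_\xi\xi)=0$, so $\nabla_\xi\xi\in\mathbb{R}\xi$; combined with $g(\nabla_\xi\xi,\xi)=\tfrac{1}{2}\xi(g(\xi,\xi))=0$, this forces $\nabla_\xi\xi=0$, whence Proposition~\ref{Prop2} gives $\hat\omega=0$ and $d\hat\phi=0$. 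A three-dimensional normal ACM-manifold with closed fundamental form and $\hat\delta\hat\eta=0$ (Proposition~\ref{Prop2}(1)) necessarily sits in $C_6$, since $C_5$ is excluded by $\hat\delta\hat\eta=0$ and $C_7$ is trivial in dimension three. The dimension-three bookkeeping is where I expect the main obstacle.
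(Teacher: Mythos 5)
Your argument is correct, and while \textbf{i)} and \textbf{ii)} essentially retrace the paper's steps (Proposition~\ref{Prop3} plus $N_J=\pi_*\bigl(N_\varphi+2d\eta\otimes\xi\bigr)=0$ for \textbf{i)}; extracting $A_{\varphi X}\xi=\varphi(A_X\xi)$ from \eqref{eq3.4} and feeding it into Proposition~\ref{Prop1} for \textbf{ii)}), you take a genuinely different route in \textbf{iii)} and in the $r=1$ case of \textbf{iv)}. For \textbf{iii)} the paper first derives the global identity $\varphi(\nabla_X\xi)=\nabla_{\varphi X}\xi$ from \eqref{eq3.4}, obtains the commutation on $\mathcal{V}$ via the symmetry $T_\xi U=T_U\xi$, and extends it to $\mathcal{H}$ by skew-symmetry of $T_\xi$; your observation that $X=\xi$ in \eqref{eq3.4} kills every term, so $\nabla_\xi\varphi=0$ and the commutation is immediate on both distributions, is cleaner and also hands you $\nabla_\xi\xi=0$ for free, which you then reuse in \textbf{iv)}. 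For the three-dimensional fibres the paper computes all components of $\hat{\nabla}\hat{\phi}$ in the frame $\{U_1,\varphi U_1,\hat{\xi}\}$ and matches them against the $C_6$ defining identity, whereas you argue by classification: normality descends to the invariant fibres, and a $3$-dimensional normal ACM-manifold with $\hat{\delta}\hat{\eta}=0$ must be $C_6$. Your route is shorter but leans on classification facts; the paper's is longer but self-contained and exhibits the $C_6$ function $\tfrac{1}{2}\hat{\delta}\hat{\phi}(\hat{\xi})$ explicitly.

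Two points need patching. First, in \textbf{i)} you invoke Proposition~\ref{Prop3}, which requires $\dim M'\geq 4$; the case $n=1$ must be disposed of separately (as the paper does: every almost Hermitian surface is K\"ahler). Second, and this is the spot you yourself flagged: your dimension-three bookkeeping rules out $C_5$ (via $\hat{\delta}\hat{\eta}=0$) and dismisses $C_7$, but the classes actually surviving in dimension three are $C_5\oplus C_6\oplus C_9\oplus C_{12}$ (as recorded in the proof of Theorem~\ref{theo3.1}), so what you must eliminate is $C_9\oplus C_{12}$, not $C_7$. This is exactly where normality of the fibre does the work, since $C_9$ and $C_{12}$ are the non-normal summands in dimension three; once you say this explicitly the argument closes. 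Neither issue is a structural flaw, but both sentences as written do not yet prove what they claim.
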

\begin{proof}
We remark
 that, if $n=1$, then $M'$ is a K\"{a}hler manifold, so \textbf{i)} holds. If $n \geq 2$, by Proposition \ref{Prop3}, $(M',J,g')$ is a $W_2 \oplus W_4$-manifold. We are going to prove that the almost complex structure $J$ is integrable.

Given $X', Y' \in \chi(M)$, let $X,Y$ be their horizontal lifts. By direct computation we have
\[N_J(X',Y')=\pi_*\big(N_\varphi (X,Y)+2d\eta(X,Y)\xi\big)=0.\]
It follows $N_J=0$, namely $J$ is integrable and $M'$ falls in the Gray-Hervella class $W_4.$

For any $X,Y \in \chi(M)$, by \eqref{eq3.4} we get
\[g(\nabla_X \xi ,Y)= (\nabla_X \phi)(\xi, \varphi Y)=(\nabla_{\varphi X}\,\eta)\varphi Y=g(\nabla_{\varphi X}\xi,\varphi Y)\]
and then we obtain
\begin{equation}\label{eq3.5}
  \varphi(\nabla_X \xi)=\nabla_{\varphi X} \xi, \, X \in \chi(M).
\end{equation}
In particular, considering $X, Y \in \chi^h(M)$, one gets:
\[A_{\varphi X}\xi=h(\nabla_{\varphi X}\xi)=\varphi(A_X \xi)\]
\[\eta(A_XY-A_{\varphi X}\varphi Y)=-g(A_X \xi , Y)+g(A_{\varphi X }\xi , \varphi Y)=0\]
Therefore, \textbf{ii)} follows by Proposition \ref{Prop1}.

Moreover, by \eqref{eq3.5} for any $U \in \chi^v(M)$, we have
\[\varphi (T_\xi U)= h(\varphi(\nabla_U \xi))=h(\nabla_{\varphi U}\xi)=T_\xi \varphi U.\]
It follows $(\varphi \circ T_\xi)_{ / \mathcal{V}} =(T_\xi \circ \varphi )_{/ \mathcal{V}}$ and being $T_\xi$ a skew-symmetric operator, we obtain \textbf{iii)}.

If $r \geq 2$ statement \textbf{iv)} is a consequence of \eqref{eq3.4}, recalling that the Lee form of any fibre $F$ is $\hat{\omega}=\omega \mid_{TF}$.

Now,  we assume $r=1$, and consider a fibre $(F, \hat{\varphi}, \hat{\xi}, \hat{\eta}, \hat{g})$. Let $\{U_1, U_2=\varphi U_1,U_3=\hat{\xi}\}$ be a local orthonormal frame defined in an open subset $W \subset F.$ Being, $\hat{\delta \hat{\eta}}=0$, one has \[(\hat{\nabla}_{U_1}\hat{\eta})U_1+(\hat{\nabla}_{U_2}\hat{\eta})U_2=0.\]
Moreover, \eqref{eq3.5} entails
\[(\hat{\nabla}_{U_2}\hat{\eta})U_2=\hat{g}(\hat{\nabla}_{\varphi U_1}\hat{\xi},\varphi U_1)=\hat{g}(\hat{\nabla}_{ U_1}\hat{\xi}, U_1)=(\hat{\nabla}_{U_1}\hat{\eta})U_1.\]
It follows
\begin{equation}\label{eq3.6}
  (\hat{\nabla}_{U_1}\hat{\eta})U_1=(\hat{\nabla}_{U_2}\hat{\eta})U_2=0.
\end{equation}
By \eqref{eq3.5} we also obtain
\begin{eqnarray}\label{eq3.7}
\hat{\delta}\hat{\phi}(\hat{\xi})&=& \hat{g}((\hat{\nabla}_{U_1}\hat{\varphi})U_1, \hat{\xi})+\hat{g}((\hat{\nabla}_{U_2}\hat{\varphi})U_2, \hat{\xi})\\
&=& -2\hat{g}(\hat{\nabla}_{U_1}\hat{\xi},U_2)=2\hat{g}(\hat{\nabla}_{U_2}\hat{\xi}, U_1).\nonumber
\end{eqnarray}
Thus, by direct computation, \eqref{eq3.6} and \eqref{eq3.7}, one proves that the only non-vanishing values for $(\hat{\nabla}_{U_i}\hat{\phi})(U_j , U_k), \, i,j,k \in \{1,2,3\}$, are
\begin{eqnarray*}
(\hat{\nabla}_{U_1}\hat{\phi})(U_1, U_3)&=&-(\hat{\nabla}_{U_1}\hat{\phi})(U_3, U_1)=(\hat{\nabla}_{U_2}\hat{\phi})(U_2, U_3)\\
&=&-(\hat{\nabla}_{U_2}\hat{\phi})(U_3,U_2)=-\frac{1}{2}\hat{\delta}\hat{\phi}(\hat{\xi}).
\end{eqnarray*}
It follows that $\hat{\nabla}\hat{\phi}$ satisfies
\[(\hat{\nabla}_{U}\hat{\phi})(V,W)=\frac{\hat{\delta}\hat{\phi}(\hat{\xi})}{2}\big(\hat{g}(U,W)\hat{\eta}(V)-\hat{g}(U,V)\hat{\eta}(W)\big),\]
hence $F$ is a $C_6$-manifold.
\end{proof}
\begin{proposition}\label{Prop3.3}
Let $\pi : M \rightarrow M'$ be a contact-complex Riemannian submersion as in Theorem \ref{theo3.3} and assume that $r=m-n \geq 1$. If the fibers of $\pi$ are totally umbilical submanifolds of $M$, then they are totally geodesic, the Lee vector field $B$ is vertical and $(M' , J' , g')$ is a K\"{a}hler manifold.
\end{proposition}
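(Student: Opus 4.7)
The plan is to exploit the commutation $T_\xi \circ \varphi = \varphi \circ T_\xi$ from Theorem \ref{theo3.3}-\textbf{iii)} against the totally umbilical hypothesis to force the mean curvature $H$ of each fibre to vanish, then deduce that $B$ is vertical via Corollary \ref{cor1}, and finally derive the K\"ahler conclusion on $M'$ via Proposition \ref{Prop3}.

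First I would record that totally umbilical means $T_UV=\hat{g}(U,V)\,H$ for every vertical $U,V$, where $H$ is the (horizontal) mean curvature vector field of the fibre; in particular $T_\xi V=\eta(V)\,H$ and $T_\xi\xi=H$. Applying the commutation to a vertical $V$ gives $\eta(\varphi V)\,H=\eta(V)\,\varphi H$, whose left side is zero; choosing $V=\xi$ on the right yields $\varphi H=0$. Since $H$ is horizontal and $\ker\varphi=\mathbb{R}\xi$ is vertical, we conclude $H=0$, so the fibres are minimal. Combined with totally umbilical, this forces $T_UV=0$ for all vertical $U,V$, i.e., the fibres are totally geodesic.

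Next, Corollary \ref{cor1} gives $N=2r\,h(B)+T_\xi\xi$; since $T_\xi\xi=H=0$ and $N=0$ (fibres being totally geodesic), and $r\geq 1$, we obtain $h(B)=0$, so $B$ is vertical. Finally, Proposition \ref{Prop3} yields $\beta(\pi_*X)=\omega(X)=g(B,X)$ for every basic $X$; this vanishes because $B$ is vertical and $X$ horizontal, and as basic vector fields project onto a full local frame of $TM'$, we obtain $\beta\equiv 0$ on $M'$. By Theorem \ref{theo3.3}-\textbf{i)}, $M'$ is a $W_4$-manifold, and a $W_4$-manifold with vanishing Lee form is K\"ahler (the defining relation of the class $W_4$ collapses to $\nabla' J=0$).

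The only delicate step is the implication $\varphi H=0\Rightarrow H=0$, which relies crucially on $H$ being horizontal together with $\ker\varphi$ being generated by the vertical vector $\xi$. After that, the argument is a direct chain of implications using tools already assembled in Section \ref{section1} and Theorem \ref{theo3.3}, uniformly valid for every $r\geq 1$ (so no separate treatment of the $r=1$ case is needed, since Theorem \ref{theo3.3}-\textbf{iii)} and Corollary \ref{cor1} both apply there).
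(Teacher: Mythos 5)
Your argument is correct and follows the same overall skeleton as the paper (mean curvature vanishes $\Rightarrow$ fibres totally geodesic $\Rightarrow$ $h(B)=0$ $\Rightarrow$ $\beta=0$ via Proposition \ref{Prop3}), but the pivotal step of killing the mean curvature $H$ is carried out differently. The paper starts from the structural fact $T_\xi\xi=h(\nabla_\xi\xi)=0$, which for a $C_4\oplus C_6\oplus C_7$-manifold follows from \eqref{eq3.5} with $X=\xi$; combined with Corollary \ref{cor1} this identifies the mean curvature vector as $\tfrac{2r}{2r+1}h(B)$ \emph{before} any hypothesis on the fibres, and then the umbilicity relation $T_UV=\tfrac{2r}{2r+1}g(U,V)h(B)$ evaluated at $U=V=\xi$ gives $h(B)=0$ in one stroke. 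You instead feed the umbilicity relation $T_UV=g(U,V)H$ into the commutation $T_\xi\circ\varphi=\varphi\circ T_\xi$ of Theorem \ref{theo3.3}-\textbf{iii)} and exploit that $\ker\varphi=\mathbb{R}\xi$ is vertical while $H$ is horizontal, so $\varphi H=0$ forces $H=0$; only afterwards do you invoke Corollary \ref{cor1} to conclude $h(B)=0$. Your route has the merit of relying solely on the stated conclusions of Theorem \ref{theo3.3}, rather than on the identity $\nabla_\xi\xi=0$ which is only implicit in that theorem's proof, at the cost of an extra step. One small point you gloss over: Proposition \ref{Prop3} and the Lee form $\beta$ of \eqref{eq9} are only available for $\dim M'=2n\geq 4$, so the case $\dim M'=2$ must be disposed of separately (trivially, since every almost Hermitian surface is K\"ahler); the paper does this in one clause. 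Everything else checks out.
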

\begin{proof}
  Being $T_\xi \xi=0$, the Lee vector field $H=\frac{2r}{2r+1}h(B)$ represents the mean curvature vector field of the fibers. So, assuming that $\pi$ has totally umbilical fibres, for any $U,V \in \chi^v(M)$ we have \[T_UV=\frac{2r}{2r+1}g(U,V)h(B).\]
  Therefore, being $T_\xi \xi =0$, we obtain $h(B)=0$. It follows that $T=0$ and $B$ is vertical.

  Finally, if $\dim M'=2$, then $M'$ is a K\"{a}hler manifold. If $\dim M' \geq 4$, we know that $M'$ falls in the class $W_4$ and, applying \eqref{eq12}, we also have $\beta=0$. It follows that $M'$ is a K\"{a}hler manifold.
\end{proof}
\begin{example}
  Let $(F, \hat{\varphi}, \hat{\xi}, \hat{\eta}, \hat{g})$ be an almost cosymplectic manifold, $\dim F = 2r+1 \geq 3$, and $(M', J, g')$ an almost Hermitian manifold. Assume that $\dim M'=2,$ so the structure $(J,g')$ is K\"{a}hler. Let $f: M' \rightarrow \mathbb{R}$ be a non-constant smooth function such that $f > 0$ everywhere. We define an almost contact metric structure on $M' \times F$ putting
  \begin{eqnarray}
\varphi (X,U)&=&(JX,\hat{\varphi}U), \, \eta(X,U)=f \hat{\eta}(U), \,\xi= \frac{1}{f}\hat{\xi},\nonumber\\
g_f&=& \pi^*_1g'+f^2\pi^*_2\hat{g},
  \end{eqnarray}
  for any $X \in \chi(M')$, $U \in \chi (F)$, $\pi_1: M'\times F \rightarrow M'$, $\pi_2: M' \times F \rightarrow F$ denoting the canonical projections.

  Since $g_f$ is the warped product metric of $g'$, $\hat{g}$ by $f $, the projection \\$\pi_1: (M' \times F, g_f) \rightarrow (M', g')$ is a Riemanninan submersion with fibres isometric to $(F,\hat{g})$ and integrable horizontal distribution. The O'Neill invariant\\ $T$ acts as \[T_UV= -g_f(U,V)grad \log f, \, U,V \in \chi(F).\]

  We denote by $M'\times_f F$ the almost contact metric manifold $(M'\times_f F, \varphi, \xi, \eta, g_f)$.

  Firstly, we observe that the fundamental forms $\phi, \Omega, \hat{\phi}$ of $M' \times_f F$, $M'$, $F$ are related by
  \begin{equation*}
    \phi((X,U), (Y,V))=\Omega(X,Y)+f^2 \hat{\phi}(U,V).
  \end{equation*}
  Being both $\Omega, \hat{\phi}$ closed, one has \[d\phi=2d(\log f ) \wedge \phi.\]
  So, putting $\omega= -d(\log f)$, the fundamental form $\phi$ satisfies \eqref{eq7}. Then, $\omega$ is the Lee form of $M' \times F$, and the Lee vector field $B=-grad \log f$ is horizontal.

  Moreover, since $\hat{\eta}$ is closed, we have
  \[d\eta=df \wedge \hat{\eta}= -\eta \wedge d(\log f).\]
  We evaluate the vector field $\nabla_\xi \xi$, when $\nabla$ is the Levi-Civita connection of $M' \times_f F$. We recall that, being $F$ almost cosymplectic, one has $\hat{\nabla}_{\hat{\xi}} \hat{\xi}=0,$ so \[\nabla_\xi \xi = \frac{1}{f^2}T_{\hat{\xi}} \hat{\xi} = B.\]
  Then, $\omega = \nabla_\xi \eta$ is exact, $d\phi = -2 \omega \wedge \phi, \,\, d\eta= \eta \wedge \omega.$
  It follows that $M' \times_f F$, which falls in the class $C_2 \oplus C_4\oplus C_9\oplus C_{12},$ is conformal to an almost cosymplectic manifold. More precisely, according to \eqref{eq66}, the structure $(\varphi, \xi, \eta, g)$ is conformal to the almost cosymplectic structure
  \begin{eqnarray}\label{new88}
    \tilde{\varphi}&=&\varphi, \,\, \tilde{\xi}= \exp (\log f) \xi = \hat{\xi}, \,\, \tilde{\eta}=\exp (-\log f ) \eta= \hat{\eta},\nonumber\\
    \tilde{g}&=&\exp (-2\log \rho)g_f= \pi^*_1\bigg(\frac{1}{f^2}g'\bigg)+\pi^*_2\hat{g}.
  \end{eqnarray}
  So, $\pi_1: M' \times_f F \rightarrow M'$ is an example of the submersions considered in \textbf{A)}.

  Moreover, it is easy to prove that the structure defined by \eqref{new88} is cosymplectic, if $(F, \hat{\varphi}, \hat{\xi}, \hat{\eta}, \hat{g})$ is a cosymplectic manifold. In this case, the manifold $M' \times_f F$ is conformal to a cosymplectic manifold and falls in the class $C_4 \oplus C_{12}$, so the projection $\pi_1: M' \times_f F \rightarrow M'$ is an example of the submersions considered in \textbf{B)}.
\end{example}

\end{document}